\def\url@leostyle{%
 \@ifundefined{selectfont}{\def\UrlFont{\sf}}{\def\UrlFont{\scriptsize\ttfamily}}} \makeatother\urlstyle{leo}
\newtheorem{theorem}{Theorem}
\newtheorem{proposition}[theorem]{Proposition}
\newtheorem{lemma}[theorem]{Lemma}
\newtheorem{corollary}[theorem]{Corollary}
\theoremstyle{definition}
\newtheorem{example}[theorem]{Example}
\newtheorem{remark}[theorem]{Remark}
\numberwithin{equation}{section}
\numberwithin{theorem}{section}
\definecolor{Red}{rgb}{0.9,0,0.0}
\definecolor{Blue}{rgb}{0,0.0,1.0}
\def\cA{\mathcal{A}}
\def\bN{\mathbb{N}}
\def\bP{\mathbb{P}}
\def\bR{\mathbb{R}}
\def\sF{\mathscr{F}}
\newcommand{\set}[1]{\{#1\}}            % set: {xyz} to be used for inline formulas
\DeclareMathOperator*{\arginf}{arg\,inf}
\def\covdist{\;{\stackrel{d}{\longrightarrow}}\;}
\title{\vspace{-2em} Trajectory Fitting Estimators for SPDEs \\ Driven by Additive Noise}
\author{
    Igor Cialenco \\[-0.3ex]
    \url{cialenco@iit.edu}  \\[-0.9ex]
    \url{http://math.iit.edu/\~igor}
 \and
     Ruoting Gong \\[-0.3ex]
    \url{rgong2@iit.edu}  \\[-0.9ex]
    \url{http://mypages.iit.edu/\~rgong2}
 \and
     Yicong Huang \\[-0.3ex]
    \url{yhuang37@hawk.iit.edu}  \\[-0.9ex]
 \and \\
        {\footnotesize Department of Applied Mathematics, Illinois Institute of Technology} \\
        {\footnotesize W 32nd Str, John T. Rettaliata Engineering Center, Room 208, Chicago, IL 60616, USA}\\
        }
\date{ {\small  %\\ Preliminary Draft}} %
First Circulated: June 15, 2016 \\
This Version: November 10, 2016 \\[1em]
Forthcoming in \textit{Statistical Inference for Stochastic Processes}
}}
\begin{document}

\maketitle

\vspace{-2em}

\smallskip
{\footnotesize
\begin{tabular}{l@{} p{350pt}}
  \hline \\[-.2em]
  \textsc{Abstract}: \ &
  In this paper we study the problem of estimating the drift/viscosity coefficient for a large class of linear, parabolic stochastic partial differential equations (SPDEs) driven by an additive space-time noise. We propose a new class of estimators, called trajectory fitting estimators (TFEs). The estimators are constructed by fitting the observed trajectory with an artificial one, and can be viewed as an analog to the  classical least squares estimators from the time-series analysis. As in the existing literature on statistical inference for SPDEs, we take a spectral approach, and assume that we observe the first $N$ Fourier modes of the solution, and we study the consistency and the asymptotic normality of the TFE, as $N\to\infty$.
  \\[0.5em]
\textsc{Keywords:} \ &  stochastic partial differential equations, trajectory fitting estimator, parameter estimation, inverse problems, estimation of viscosity coefficient \\
\textsc{MSC2010:} \ &   60H15, 35Q30, 65L09 \\[1em]
  \hline
\end{tabular}
}

\section{Introduction}
In this paper we study a parameter estimation problem for the drift/viscosity coefficient of a linear parabolic stochastic partial differential equation (SPDE) driven by an additive space-time noise (possibly colored in space). We assume that the  observable variable  is one path of the solution, observed continuously on a finite time-interval as an element of an infinite dimensional Hilbert space. More precisely, we assume that the first $N$ Fourier modes of the solution are observed continuously on the time-interval $[0,T]$, and we investigate the asymptotics of the proposed estimators, as $N\to\infty$. Most of the existing literature on statistical inference for SPDEs has a similar spectral approach, starting with the seminal paper by Huebner and Rozovskii~\cite{HuebnerRozovskii}, and essentially explore the Maximum Likelihood Estimator (MLE) approach. The main idea can be  summarized as follows: the measures generated by the solution $u$ of a parabolic SPDE that corresponds to different drift parameters are singular to each other (under some appropriate subordination assumption), which indicates that the true parameter can be found exactly. Indeed, by taking an appropriate finite dimensional projection of the solution, one obtains a finite dimensional system of  stochastic ordinary differential equations (SODEs) for which an $\textrm{MLE}$ for the drift coefficient exists. One can show that as the dimension $N$ of the projection increases, the $\textrm{MLE}$ converges to the true parameter and it is asymptotically normal. The MLE type estimators for SPDEs are well understood, to some extent, and for more details on this method we refer to the survey~\cite{Lototsky2009Survey} and to the monograph \cite{Bishwal2008} on linear SPDEs, to~\cite{IgorNathanAditiveNS2010} for an adaptation of this method to a nonlinear SPDE, to~\cite{CXu2014,CXu2015} for the hypothesis testing for stochastic fractional heat equation, and to~\cite{MishraBishwal1995,PiterbargRozovskii1997, Markussen2003, PrakasaRao2003, MishraPrakasaRao2002} for discrete time sampling. For linear, diagonalizable\footnote{A diagonalizable SPDE is an equation for which the first $N$ Fourier coefficients of the solution form an $N$-dimensional \textit{decoupled} system of ordinary stochastic differential equation. For a formal definition, in terms of the differential operators and the structure of the noise term, see for instance~\cite{Lototsky2009Survey}.} SPDEs, the $\textrm{MLE}$ can be computed explicitly, and of course, from a statistical point of view there is no need to study other type of estimators. In general, this statement does not hold true for non-diagonalizable equations such as nonlinear SPDEs or SPDEs driven by a multiplicative noise. While the parameter estimation problem for SODEs went way beyond the MLEs (cf. the monograph~\cite{KutoyantsBook2004}), there exist a limited number of works dedicated to the non-MLE statistical inference for SPDEs. For example, in~\cite{CialencoLototsky2009} the authors explore the singularity of corresponding probability measures and derive a closed-form estimators for the drift coefficient for some linear parabolic SPDEs driven by a multiplicative noise (of special structure). This study is the first attempt to investigate different type of estimators, and it is related to what is known in the literature the  trajectory fitting estimators (TFEs). Using as observations the first $N$ Fourier modes, we construct the TFE by analogy to the TFE for continuously observed finite dimensional ergodic diffusion processes first introduced by Y.~A.~Kutoyants \cite{Kutoyants1991}; see also \cite[Section 1.3 \& Section 2.3]{KutoyantsBook2004} and references therein. It would be fair to call this estimator Kutoyants Estimator, but it seems that TFE is already a well established name  and thus we will follow this terminology. The TFE can be also viewed as an analog of the least squares estimators widely used in time-series analysis.

As already mentioned, we study the asymptotic properties of the TFE as $N\to\infty$, in contrast to the diffusion processes where the asymptotics are done for the large-time regime. Surely one can investigate the large-time asymptotics for SPDEs too, but we find this case to be too similar to the estimators for diffusion processes and we omitted it here. In this study, we consider a fairly simple, although general, class of SPDEs: linear, parabolic, diagonalizable equations, driven by an additive space-time noise. The diagonalizable nature of these equations, allows to derive explicit expressions for the considered estimators and for the asymptotics of their first two moments, and hence to investigate the rate of convergence of these estimators. While simple, these equations can be viewed as a good approximation of some more complicated and practically important models. On the other hand, the obtained results will serve as benchmarks for future studies of more complicated and realistic models which will be addressed in the sequel. Under some general structural assumptions we prove consistency and asymptotic normality of the proposed estimators.

The paper is organized as follows. In Section~\ref{sec:Setup} we setup the problem and give some auxiliary results. The TFE is derived in Section~\ref{sec:TFE-construction}. Section~\ref{sec:mainResults} is devoted to the main results -- consistency of TFE (Theorem~\ref{thm:Consistency}) and asymptotic normality (Theorem~\ref{thm:AsymNormal}). In Section~\ref{sec:Examples} we discuss the TFE for two particular illustrative examples of SPDEs. Due to the nature of the proofs, some proofs required nontrivial and tedious computations for which, as appropriate, we  used symbolic computations in a mathematical software. All technical proofs and some auxiliary results are deferred to the Appendix.

\section{Setup of the problem and some auxiliary results}\label{sec:Setup}
Let $(\Omega,\sF,\set{\sF_t}_{t\geq 0},\bP)$ be a stochastic basis with usual assumptions, and let $H$ be a separable Hilbert space, with the corresponding inner product $(\,\cdot\,,\,\cdot\,)_{H}$. We consider the following evolution equation
\begin{align}\label{eq:mainAbstract}
du(t)+\left(\theta\cA_{1}+\cA_{0}\right)u(t)\,dt&=\sigma\,dW(t),
\end{align}
with initial condition $u(0)=u_{0}\in H$, and where $\cA_{0}$ and $\cA_{1}$ are linear operators on $H$, $W:=\{W(t)\}_{t\geq 0}$ is a cylindrical Brownian motion in $H$, and  $\sigma,\theta\in\bR_{+}:=(0,\infty)$. We will take the continuous-time observation framework by assuming that the solution $u$, as an object in $H$, or a finite dimensional projection in $H$ of it, is observed continuously in time for all $t\in[0,T]$, and for some fixed horizon $T$. In this framework, the parameter $\sigma$ can be found exactly, using a quadratic variation argument, and thus we will assume that $\sigma$ is known. We are interested in estimating the unknown parameter $\theta\in\Theta\subset\bR_{+}$.

We start with some structural assumptions on \eqref{eq:mainAbstract} related to the well-posedness of the solution. Throughout the paper, we assume that:
\begin{enumerate}[(i)]
\item The operators $\cA_{0}$ and $\cA_{1}$ have only point spectra, and a common system of eigenfunctions $\{h_{k}\}_{k\in\bN}$ that form a complete, orthonormal system in $H$. We denote the corresponding eigenvalues of $\cA_{0}$ and $\cA_{1}$ by $\rho_{k}$ and $\nu_{k}$, $k\in\bN$, respectively.
\item The sequence $\{\mu_{k}(\theta)\}_{k\in\bN}$, where $\mu_{k}(\theta):=\theta\nu_{k}+\rho_{k}$, is such that
    \begin{align}
    \lim_{k\rightarrow\infty}\mu_{k}(\theta)=+\infty,
    \end{align}
    where the convergence is uniform in $\theta\in\Theta$.
\item There exist universal constants $J\in\bN$ and $c_{0}>0$ such that, for any $k\geq J$ and any $\theta\in\Theta$,
    \begin{align}
    \frac{\nu_{k}}{\mu_{k}(\theta)}\leq c_{0}.
    \end{align}
\item The sequence $\{\nu_{k}\}_{k\in\bN}$ is such that $\lim_{k\rightarrow\infty}\nu_{k}=+\infty.$\footnote{Without loss of generality, we will assume that $\nu_{k}\geq 0$, for all $k\in\bN$.}
\item The noise term $W$ is a cylindrical Brownian motion in $H$, and has the following form
    \begin{align}\label{eq:W}
    W(t)=\sum_{k=1}^{\infty}\lambda_{k}^{-\gamma}h_{k}w_{k}(t),\quad t\geq 0,
    \end{align}
    for some $\gamma\geq 0$, where $\lambda_{k}:=\nu_{k}^{1/(2m)}$, $k\in\bN$, for some $m\geq 0$,\footnote{Of course, one can consider at once just $\lambda_{k}=\nu_{k}$. Our choice to consider $m$ is to put the results on par with the notations from the existing literature. As mentioned later, if $\cA_{0}$ and $\cA_{1}$ are some pseudo-differential operators, then it is convenient to denote by $2m$ the order of the leading order operator.} and where $w_{k}:=\{w_{k}(t)\}_{t\geq 0}$, $k\in\bN$, is a collection of independent standard Brownian motions.
\end{enumerate}

Conditions (i)--(v) imply that the equation \eqref{eq:mainAbstract} is linear, diagonalizable, parabolic, and that the solution exists and is unique; this can be established by standard methods from theory of SPDEs and we refer, for instance, to~\cite{Lototsky2009Survey, HuebnerLototskyRozovskii97, HuebnerRozovskii} for similar setup, or to~\cite{RozovskiiBook,ChowBook} for a general theory. Of course, one class of operators $\cA_{0}$ and $\cA_{1}$ that satisfy the above conditions are pseudo-differential operators on bounded domains with appropriate boundary conditions, with $\cA_{0}$ being subordinated to $\cA_{1}$ -- see Section \ref{sec:Examples} for some particular examples.

It is out of the scope of this work to go through the constructions of the scale of Sobolev space, or to investigate the exact order of regularity of the solution. We will only mention that generally speaking the series \eqref{eq:W} may diverge, but one can enlarge the underlying space $H$, such that $W$ is a square-integrable martingale. For example, let $\mathcal{X}$ be the closure of $H$ with respect to the norm
$\left\|f\right\|_{\mathcal{X}}:=\big(\sum_{k=1}^{\infty}k^{-2}\left(f,h_{k}\right)_{H}^{2}\big)^{1/2}$. Then, the unique solution to \eqref{eq:mainAbstract}, with initial condition $u(0)=u_{0}$, is a continuous (in mean-square sense) $\mathcal{X}$-valued stochastic process given by
\begin{align}
u(t)=\sum_{k=1}^{\infty}u_{k}(t)h_{k},\quad t\geq 0,
\end{align}
where, for each $k\in\mathbb{N}$, $u_{k}:=\{u_{k}(t)\}_{t\geq 0}$ satisfies the following ordinary stochastic differential equation (SDE)
\begin{align}\label{eq:FModes}
du_{k}(t)+ \mu_{k}(\theta)\,u_{k}(t)\,dt=\sigma\lambda_{k}^{-\gamma}\,dw_{k}(t),
\end{align}
with initial condition $u_{k}(0)=(u_{0},h_{k})_{H}$. The stochastic processes $u_{k}$, $k\in\mathbb{N}$, are the Fourier modes of the solution $u$ with respect to the basis $\{h_{k}\}_{k\in\mathbb{N}}$ of $H$, i.e., $u_{k}(t)=(u(t),h_{k})_{H}$, $t\geq 0$, $k\in\bN$. Note that the SDEs of the form \eqref{eq:FModes}, for $k\in\mathbb{N}$, provide an infinite system of decoupled/independent Ornstein--Uhlenbeck processes. As already mentioned, the decoupling nature of Fourier modes, or the diagonalizable property of the original equation, will play a critical role in our study, and it is essentially guaranteed by the assumptions (i) and (v). By It\^{o}'s formula, clearly we have
\begin{align}\label{eq:FModeSols}
u_{k}(t)=e^{-\mu_{k}(\theta)t}u_{k}(0)+\sigma\lambda_{k}^{-\gamma}e^{-\mu_{k}(\theta)t}\int_{0}^{t}e^{\mu_{k}(\theta)s}\,dw_{k}(s),\quad t\geq 0,\quad k\in\bN.
\end{align}

\subsection{Trajectory Fitting Estimators}\label{sec:TFE-construction}

The trajectory fitting estimators for continuous-time diffusion processes can be viewed as an analog of the least squares estimators widely used in time-series analysis. Following~\cite[Section 1.3 \& Section 2.3]{KutoyantsBook2004}, we will briefly describe the TFEs for finite-dimensional diffusions. Assume that the observed process $S(\theta):=\{S(t;\theta)\}_{t\geq 0}$ follows the dynamics
\begin{align}\label{eq:SDES}
dS(t;\theta)=b(\theta,S(t;\theta))dt+\sigma(S(t;\theta))\,dB(t),
\end{align}
where $B:=\{B(t)\}_{t\geq 0}$ is a one-dimensional standard Brownian motion, and $\theta$ is the parameter of interest. We assume that the drift $b$ and the volatility $\sigma$ are known, and that the solution to \eqref{eq:SDES} (with certain initial condition $S(0,\theta)=S_{0}$) exists and is unique, for any $\theta\in\Theta$. Let  $F:\bR\rightarrow\bR$ be a twice continuously differentiable function. By It\^{o}'s formula,
\begin{align}
F(S(t;\theta))&=F(S_{0})+\int_{0}^{t}\left(F'(S(s;\theta))b\left(\theta,S(s;\theta)\right)+\frac{1}{2}F''(S(s;\theta))\sigma^{2}(S(s;\theta))\right)ds\\
&\quad\,\,+\int_{0}^{t}F'(S(s;\theta))\sigma(S(s;\theta))\,dB(s).
\end{align}
For any $\theta\in\Theta$ and $t\in[0,T]$, let
\begin{align}
\widetilde{F}(t;\theta):=F(S_{0})+\int_{0}^{t}\left(F'(S(s;\theta))b\left(\theta,S(s;\theta)\right)+\frac{1}{2}F''(S(s;\theta))\sigma^{2}(S(s;\theta))\right)ds.
\end{align}
The \emph{trajectory fitting estimator}\footnote{The terminology comes from the fact that the estimator is obtained by fitting the observed trajectory with the artificial one.} $\widetilde{\theta}_{T}$ of $\theta$ is then defined as the solution to the minimization problem
\begin{align}\label{eq:TFESDE}
\widetilde{\theta}_{T}:=\arginf_{\theta\in\Theta}\int_{0}^{T}\left(F(S(t;\theta))-\widetilde{F}(t;\theta)\right)^{2}dt.
\end{align}
The choice of function $F$ depends on the underlying models, and has to be taken such that the estimator satisfies the desired asymptotic properties (consistency, asymptotic normality, etc).

In this study, it is enough to consider the quadratic function $F(x)=x^{2}$, the choice we make throughout. For each Fourier mode $u_{k}$, $k\in\bN$, by It\^{o}'s formula, we have
\begin{equation}\label{eq:uk2}
u_{k}^{2}(t)=u_{k}^{2}(0)+\int_{0}^{t}\left(\sigma^{2}\lambda_{k}^{-2\gamma}-2\mu_{k}(\theta)u_{k}^{2}(s)\right)ds+2\sigma\lambda_{k}^{-\gamma}\int_{0}^{t}u_{k}(s)\,dw_{k}(s),\quad t\geq 0.
\end{equation}
By \eqref{eq:TFESDE}, one can easily construct a TFE for $\theta$ based on the trajectory on $[0,T]$, for some fixed horizon $T>0$, of each Fourier mode $u_{k}$. The \emph{long-time} asymptotic behavior of such estimators as $T\rightarrow\infty$ has been well investigated (cf.~\cite{KutoyantsBook2004}), and thus we will omit it here.

By analogy to the construction of maximum likelihood estimators for SPDEs (cf.~\cite{Lototsky2009Survey}), we will construct a TFE for the unknown parameter $\theta$ based on the trajectories of the first $N$ Fourier modes of the solution. Moreover, for a fixed horizon $T>0$, we will study the \emph{large-space} asymptotic behavior of the TFE as the number of the Fourier modes increases, which is a distinguished feature for an infinite dimensional evolution system. Specifically, fix any $T>0$, and for any $\theta\in\Theta$, let
\begin{equation}\label{eq:uhat}
V_{k}(t;\theta):=u_{k}^{2}(0)+\int_{0}^{t}\left(\sigma^{2}\lambda_{k}^{-2\gamma}-2\mu_{k}(\theta)u_{k}^{2}(s)\right)ds,
\quad k\in\bN,\quad t\in [0,T].
\end{equation}
The TFE for the unknown parameter $\theta$ is defined as
\begin{equation}
\widetilde{\theta}_{N} =\widetilde{\theta}_{N}(\gamma,T,\sigma,m):=\arginf_{\theta\in\Theta}\sum_{k=1}^{N}\int_{0}^{T}\left(V_{k}(t;\theta)-u_{k}^{2}(t)\right)^{2}dt.
\end{equation}
We are interested in the asymptotic properties of $\widetilde{\theta}_{N}$, as $N\to\infty$, with $T$ being fixed.

In what follows, we will make use of the following notations. For any $t\in[0,T]$, let
\begin{align}\label{eq:notation}
\xi_{k}(t)&:=\!\int_{0}^{t}u_{k}^{2}(s)\,ds,\,\,\,\,X_{k}(t):=\!\int_{0}^{t}s\xi_{k}(s)\,ds
,\,\,\,\,Y_{k}(t):=\!\int_{0}^{t}\xi_{k}(s)\,ds,\,\,\,\,Z_{k}(t):=\!\int_{0}^{t}\xi_{k}^{2}(s)\,ds.
\end{align}
One advantage of the TFE is that it can be given by an explicit formula that does not contain a stochastic integral. Indeed, by \eqref{eq:uk2} and \eqref{eq:uhat},
\begin{equation}
\sum_{k=1}^{N}\int_{0}^{T}\left(V_{k}(t;\theta)-u_{k}^{2}(t)\right)^{2}dt=\sum_{k=1}^{N}\int_{0}^{T}\left(u_{k}^{2}(0)+\sigma^{2}\lambda_{k}^{-2\gamma}t-2\rho_{k}\xi_{k}(t)-u_{k}^{2}(t)-2\theta\nu_{k}\xi_{k}(t)\right)^{2}dt.
\end{equation}
The maximizer of the last expression, with respect to $\theta$, can be computed simply by finding the root of the first-order derivative. This yields the following explicit expression for the TFE
\begin{equation}\label{eq:thetaMain}
\widetilde{\theta}_{N}=-\frac{\sum_{k=1}^{N}\nu_{k}\left(\frac{1}{2}\xi_{k}^{2}(T)-u_{k}^{2}(0)Y_{k}(T)-\sigma^{2}\lambda_{k}^{-2\gamma}X_{k}(T)+2\rho_{k}Z_{k}(T)\right)}{2\sum_{k=1}^{N}\nu_{k}^{2}Z_{k}(T)}.
\end{equation}

\section{Main results}\label{sec:mainResults}
In what follows, we  will denote by $\theta$ the true parameter. For notational simplicity, the $T$ variable in $\xi_{k}(T)$, $X_{k}(T)$, $Y_{k}(T)$, $Z_{k}(T)$ and $A_{k}(T)$ will be omitted from now on. A straightforward algebraic computation leads to
\begin{align}\label{eq:thetaDif}
\widetilde{\theta}_{N}-\theta=-\frac{\sum_{k=1}^{N}\nu_{k}\left(\frac{1}{2}\xi_{k}^{2}-u_{k}^{2}(0)Y_{k}-\sigma^{2}\lambda_{k}^{-2\gamma}X_{k}+2\mu_{k}(\theta)Z_{k}\right)}{2\sum_{k=1}^{N}\nu_{k}^{2}Z_{k}}=:-\frac{\sum_{k=1}^{N}\nu_{k}A_{k}}{2\sum_{k=1}^{N}\nu_{k}^{2}Z_{k}}.
\end{align}

As usual, for two sequences of positive numbers $\{a_{n}\}_{n\in\bN}$ and $\{b_{n}\}_{n\in\bN}$, we will write $a_{n}\sim b_{n}$ if $\lim_{n\rightarrow\infty}a_{n}/b_{n}=1$, and will write $a_{n}\asymp b_{n}$ if there exist universal constants $K_{2}>K_{1}>0$, such that $K_{1}b_{n}\leq a_{n}\leq K_{2}b_{n}$ for $n\in\mathbb{N}$ large enough.

We start with a technical result regarding the leading order terms of the means and variances of $A_{k}$ and $Z_{k}$, as $k\to\infty$. The proof will be deferred to the Appendix.
\begin{proposition}\label{prop:asymp}
Let the assumptions \emph{(i) - (v)} be satisfied. Then, as $k\rightarrow\infty$,
\begin{align}\label{eq:ezk}
\mathbb{E}(Z_{k})&\asymp\frac{1}{\mu_{k}^{2}(\theta)}\left(u_{k}^{2}(0)+\sigma^{2}T\lambda_{k}^{-2\gamma}\right)^{2},\\
\label{eq:varzk} \emph{Var}(Z_{k})&\asymp\frac{\lambda_{k}^{-2\gamma}}{\mu_{k}^{5}(\theta)}\left(u_{k}^{2}(0)+\sigma^{2}T\lambda_{k}^{-2\gamma}\right)^{3},\\
\label{eq:eak} \mathbb{E}(A_{k})&\asymp\frac{\lambda_{k}^{-2\gamma}}{\mu_{k}^{2}(\theta)}\left(u_{k}^{2}(0)+\sigma^{2}T\lambda_{k}^{-2\gamma}\right),\\
\label{eq:varak}
\emph{Var}(A_{k})&\asymp\frac{\lambda_{k}^{-2\gamma}}{\mu_{k}^{3}(\theta)}\left(u_{k}^{2}(0)+\sigma^{2}T\lambda_{k}^{-2\gamma}\right)^{3}.
\end{align}
\end{proposition}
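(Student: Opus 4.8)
The plan is to use that each Fourier mode $u_k$ is, by \eqref{eq:FModeSols}, a Gaussian (Ornstein--Uhlenbeck) process, so that $\xi_k,X_k,Y_k,Z_k$ and $A_k$ are polynomial functionals of the Gaussian field $\{u_k(r):r\in[0,T]\}$ whose moments are finite and reducible, via the Isserlis/Wick formula, to iterated integrals of the mean $m_k(r):=\bE[u_k(r)]=e^{-\mu_k(\theta)r}u_k(0)$ and the covariance function (obtained from It\^{o}'s isometry)
\begin{equation}
c_k(r,r'):=\Cov\big(u_k(r),u_k(r')\big)=\frac{\sigma^2\lambda_k^{-2\gamma}}{2\mu_k(\theta)}\big(e^{-\mu_k(\theta)|r-r'|}-e^{-\mu_k(\theta)(r+r')}\big).
\end{equation}
Abbreviating $\mu_k:=\mu_k(\theta)$ and $C_k:=u_k^2(0)+\sigma^2T\lambda_k^{-2\gamma}$, the common engine for all four estimates is the large-$\mu_k$ concentration principle: for $\alpha>0$ and bounded continuous $f$,
\begin{equation}
\int_0^t e^{-\alpha\mu_k r}f(r)\,dr\sim\frac{f(0)}{\alpha\mu_k},\qquad\int_0^t\!\!\int_0^t e^{-\alpha\mu_k|r-r'|}f(r,r')\,dr\,dr'\sim\frac{1}{\alpha\mu_k}\int_0^t f(r,r)\,dr,
\end{equation}
so the kernels $e^{-\alpha\mu_k r}$ and $e^{-\alpha\mu_k|r-r'|}$ deposit their mass, of order $\mu_k^{-1}$, at the origin and on the diagonal respectively.

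I would first dispatch the means. Integrating $\bE[u_k^2(r)]=m_k^2(r)+c_k(r,r)$ gives $\bE[\xi_k(t)]\sim\frac{1}{2\mu_k}\big(u_k^2(0)+\sigma^2\lambda_k^{-2\gamma}t\big)$; since $\int_0^T(a+bs)^2\,ds\asymp(a+bT)^2$ for $a,b\ge0$ (restrict to $s\in[T/2,T]$ for the lower bound), squaring and integrating yields $\int_0^T(\bE\xi_k(s))^2\,ds\asymp\mu_k^{-2}C_k^2$. As $\bE[Z_k]=\int_0^T(\bE\xi_k(s))^2\,ds+\int_0^T\Var(\xi_k(s))\,ds$ and the second integral is of order $\mu_k^{-3}\lambda_k^{-2\gamma}C_k$---strictly smaller, because $C_k\ge\sigma^2T\lambda_k^{-2\gamma}\gg\mu_k^{-1}\lambda_k^{-2\gamma}$ once $\mu_k\to\infty$---this proves \eqref{eq:ezk}. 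For \eqref{eq:eak} the cleanest route exploits the martingale $M_k(t):=2\sigma\lambda_k^{-\gamma}\int_0^t u_k(s)\,dw_k(s)$ of \eqref{eq:uk2}: substituting $u_k^2(s)=u_k^2(0)+\sigma^2\lambda_k^{-2\gamma}s-2\mu_k\xi_k(s)+M_k(s)$ into $\tfrac12\xi_k^2(T)=\int_0^T\xi_k(s)u_k^2(s)\,ds$ collapses the combination defining $A_k$ in \eqref{eq:thetaDif} to the pure fluctuation term
\begin{equation}
A_k=\int_0^T\xi_k(s)\,M_k(s)\,ds.
\end{equation}
This exhibits $A_k$ as a genuine noise functional and explains its smaller order: in $\bE[A_k]$ the $O(\mu_k^{-1})$ contributions (notably $2\mu_k\bE[Z_k]$) cancel, and retaining the $O(\mu_k^{-2})$ terms---whose surviving part is produced chiefly by $2\mu_k\Var(\xi_k(s))$---gives the order $\mu_k^{-2}\lambda_k^{-2\gamma}C_k$ of \eqref{eq:eak}.

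For the variances \eqref{eq:varzk} and \eqref{eq:varak} I would write $\xi_k(s)=g_k(s)+\eta_k(s)$ with $g_k(s)=\bE[\xi_k(s)]$ of order $\mu_k^{-1}$ and $\eta_k(s)$ a centered fluctuation of strictly smaller order, and expand $\Var(Z_k)=\int_0^T\!\int_0^T\Cov\big(\xi_k^2(s),\xi_k^2(t)\big)\,ds\,dt$. The dominant contribution is the ``Gaussian'' pairing $4g_k(s)g_k(t)\Cov(\xi_k(s),\xi_k(t))$, where the concentration estimates give $\Cov(\xi_k(s),\xi_k(t))\asymp\mu_k^{-3}\sigma^2\lambda_k^{-2\gamma}\big(u_k^2(0)+\sigma^2\lambda_k^{-2\gamma}\min(s,t)\big)$; integrating the triple product over $[0,T]^2$, on whose bulk each factor is comparable to $C_k$, yields the claimed $\mu_k^{-5}\lambda_k^{-2\gamma}C_k^3$. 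The parallel reduction of $\Var(A_k)=\int_0^T\!\int_0^T\Cov\big(\xi_k(s)M_k(s),\xi_k(t)M_k(t)\big)\,ds\,dt$ through the representation above produces the order $\mu_k^{-3}\lambda_k^{-2\gamma}C_k^3$.

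I expect the main obstacle to be exactly these last two computations. Being degree-four functionals, $Z_k$ and $A_k$ have variances governed by eighth-order Gaussian moments, so Wick's formula generates many nested integrals of $m_k$ and $c_k$; one must isolate the single dominant family, bound every remaining pairing by a strictly larger power of $\mu_k^{-1}$, and---most delicately for a two-sided $\asymp$ with $k$-independent constants---verify that each comparison $\int_0^T(\cdots)\,ds\asymp C_k^{\,p}$ holds \emph{uniformly} over the whole range of the ratio $u_k^2(0):\sigma^2T\lambda_k^{-2\gamma}\in[0,\infty]$. This is presumably where the paper resorts to symbolic computation; throughout, condition~(ii) (uniform divergence of $\mu_k$) is what renders the exponential remainders $e^{-\alpha\mu_k T}$ uniformly negligible and legitimizes the lower bounds obtained by restricting integration to $s,t\in[T/2,T]$.
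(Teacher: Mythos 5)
Your plan is correct, but it takes a genuinely different route from the paper. The paper's own proof treats only the reduced case $u_{0}=0$, $\gamma=0$, $\sigma=1$: it generates a closed hierarchy of mixed moments $\bE\left(\xi_{k}^{j}(t)u_{k}^{2i}(t)\right)$ by repeated application of It\^{o}'s formula (seeded by the Gaussian moment identity \eqref{eq:CenNormalMoments}), solves the resulting ODE system in closed form via symbolic computation in Mathematica, and reads off the stated orders from the exact expressions for $\bE(Z_{k})$ and $\Var(Z_{k})$; the general case is only asserted to follow by ``similar arguments.'' You instead work directly in the general case, replacing the exact computation by the Gaussian covariance kernel $c_{k}(r,r')$, Wick/Isserlis reduction, Laplace-type concentration of the exponential kernels, the splitting $\xi_{k}=\bE\xi_{k}+\eta_{k}$, and Cauchy--Schwarz/hypercontractivity bounds on the subdominant pairings. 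Your identity $A_{k}=\int_{0}^{T}\xi_{k}(s)M_{k}(s)\,ds$ is not new relative to the paper as a whole---it is exactly the representation $A_{k}=2\sigma\lambda_{k}^{-\gamma}\int_{0}^{T}\zeta_{k}(t)\xi_{k}(t)\,dt$ derived in Step 1 of the proof of Theorem \ref{thm:AsymNormal}---but invoking it already at the level of Proposition \ref{prop:asymp} is a nice unification: it explains structurally why the $O(\mu_{k}^{-1}(\theta))$ contributions to $\bE(A_{k})$ cancel, which in the paper is visible only after the exact formulas are assembled. As for what each approach buys: the paper's computation yields exact leading-order constants (Remark \ref{rem:asymp}), which are needed later for the asymptotic formulas for $a_{N}$ and $b_{N}$ in the Corollary, whereas your method delivers only two-sided orders; on the other hand, yours handles general $u_{0}$, $\gamma$, $\sigma$ head-on, makes explicit the key delicacy that the $\asymp$ constants must be uniform over the full range of the ratio $u_{k}^{2}(0):\sigma^{2}T\lambda_{k}^{-2\gamma}$ (a point the paper's ``similar arguments'' leaves implicit), and needs no symbolic computation. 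The unfinished Wick bookkeeping you flag for $\Var(Z_{k})$ and $\Var(A_{k})$ is real but routine along the lines you indicate: since $\eta_{k}(s)$ lies in Wiener chaos of order at most two, $\bE\left(\eta_{k}^{4}(s)\right)\lesssim\left(\Var\,\xi_{k}(s)\right)^{2}$ with a universal constant, all non-dominant pairings are then smaller than the dominant one by a factor $O\!\left(\mu_{k}^{-1/2}(\theta)\right)$ uniformly in $k$, and since the dominant pairing is a nonnegative integrand its lower bound (via restriction to $[T/2,T]$) survives for $k$ large, which is all that $\asymp$ requires.
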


\begin{remark}\label{rem:asymp}
In the above proposition, the exact asymptotic behavior of the means and variances of $Z_{k}$ and $A_{k}$, as $k\rightarrow\infty$, can be obtained as follows:
\begin{align}\label{eq:ezkExact} \mathbb{E}(Z_{k})&\sim\frac{u_{k}^{4}(0)T}{4\mu_{k}^{2}(\theta)}+\frac{\sigma^{2}\lambda_{k}^{-2\gamma}u_{k}^{2}(0)T^{2}}{4\mu_{k}^{2}(\theta)}+\frac{\sigma^{4}\lambda_{k}^{-4\gamma}T^{3}}{12\mu_{k}^{2}(\theta)},\\
\label{eq:varzkExact} \text{Var}(Z_{k})&\sim\frac{\sigma^{2}\lambda_{k}^{-2\gamma}u_{k}^{6}(0)T^{2}}{2\mu_{k}^{5}(\theta)}+\frac{2\sigma^{4}\lambda_{k}^{-4\gamma}u_{k}^{4}(0)T^{3}}{3\mu_{k}^{5}(\theta)}+\frac{\sigma^{6}\lambda_{k}^{-6\gamma}u_{k}^{2}(0)T^{4}}{3\mu_{k}^{5}(\theta)}+\frac{\sigma^{8}\lambda_{k}^{-8\gamma}T^{5}}{15\mu_{k}^{5}(\theta)},\\
\label{eq:eakExact} \mathbb{E}(A_{k})&\sim\frac{\sigma^{2}\lambda_{k}^{-2\gamma}u_{k}^{2}(0)T}{\mu_{k}^{2}(\theta)}+\frac{\sigma^{4}\lambda_{k}^{-4\gamma}T^{2}}{2\mu_{k}^{2}(\theta)},\\
\label{eq:varakExact} \text{Var}(A_{k})&\sim\frac{\sigma^{2}\lambda_{k}^{-2\gamma}u_{k}^{6}(0)T^{2}}{2\mu_{k}^{3}(\theta)}+\frac{2\sigma^{4}\lambda_{k}^{-4\gamma}u_{k}^{4}(0)T^{3}}{3\mu_{k}^{3}(\theta)}+\frac{\sigma^{6}\lambda_{k}^{-6\gamma}u_{k}^{2}(0)T^{4}}{3\mu_{k}^{3}(\theta)}+\frac{\sigma^{8}\lambda_{k}^{-8\gamma}T^{5}}{15\mu_{k}^{3}(\theta)}.
\end{align}
These formulas will be used to obtain the exact asymptotic bias and the exact rate of convergence in the proof of asymptotic normality.
\end{remark}

\subsection{The Consistency of TFE}
In this subsection we prove the large-space consistency of the TFE $\widetilde{\theta}_{N}$, as $N\rightarrow\infty$. The proof relies on the classical version of the Strong Law of Large Numbers (cf. \cite[Theorem IV.3.2]{ShiryaevBookProbability}), which we state in the Appendix for sake of completeness. With this at hand, we now present the first main result of this paper.
\begin{theorem}[Consistency of TFE]\label{thm:Consistency}
 Let the assumptions \emph{(i) - (v)} be satisfied. Moreover, assume that
\begin{align}\label{eq:zk}
\sum_{k=1}^{\infty}\nu_{k}^{2}\, \mathbb{E}(Z_{k})=\infty.
\end{align}
Then,
\begin{align}
\lim_{N\rightarrow\infty}\widetilde{\theta}_{N}=\theta,\quad\mathbb{P}-\text{a.}\,\text{s.}.
\end{align}
\end{theorem}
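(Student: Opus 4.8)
The plan is to exploit the explicit decomposition \eqref{eq:thetaDif} together with the decoupling of the Fourier modes. Since each pair $(A_k,Z_k)$ is a measurable functional of the single Brownian motion $w_k$ (through $\xi_k,X_k,Y_k,Z_k$) and the $\{w_k\}_{k\in\bN}$ are independent, both families $\{\nu_k A_k\}_{k\in\bN}$ and $\{\nu_k^2 Z_k\}_{k\in\bN}$ consist of independent random variables. Writing $D_N:=\sum_{k=1}^N \nu_k^2\,\mathbb{E}(Z_k)$, which by the standing assumption \eqref{eq:zk} is nondecreasing and diverges to $+\infty$, I would recast \eqref{eq:thetaDif} as
\[
\widetilde{\theta}_N-\theta=-\frac{1}{2}\cdot\frac{D_N^{-1}\sum_{k=1}^N\nu_k A_k}{D_N^{-1}\sum_{k=1}^N\nu_k^2 Z_k},
\]
so that the whole claim reduces to showing $D_N^{-1}\sum_{k=1}^N\nu_k^2 Z_k\to 1$ and $D_N^{-1}\sum_{k=1}^N\nu_k A_k\to 0$, $\mathbb{P}$-a.s.

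For the denominator I would apply the SLLN (Theorem~IV.3.2 of \cite{ShiryaevBookProbability}) to the independent summands $\eta_k:=\nu_k^2 Z_k$ with normalizer $D_k$; it suffices to verify $\sum_k\Var(\eta_k)/D_k^2<\infty$. The key reduction is the per-term estimate
\[
\frac{\Var(\nu_k^2 Z_k)}{\nu_k^2\,\mathbb{E}(Z_k)}=\frac{\nu_k^2\,\Var(Z_k)}{\mathbb{E}(Z_k)}\asymp\frac{\nu_k^2\,\lambda_k^{-2\gamma}}{\mu_k^3(\theta)}\big(u_k^2(0)+\sigma^2T\lambda_k^{-2\gamma}\big),
\]
which follows from \eqref{eq:ezk} and \eqref{eq:varzk}. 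Using $\nu_k/\mu_k(\theta)\le c_0$ from assumption~(iii), the boundedness of the bracket (since $u_k^2(0)\to0$ because $u_0\in H$ and $\lambda_k^{-2\gamma}$ is bounded), and $\mu_k(\theta)\to\infty$ from assumption~(ii), this ratio tends to $0$, hence is bounded by some $M$. I would then close with the elementary telescoping bound $\sum_k(D_k-D_{k-1})/D_k^2<\infty$ (a standard consequence of $D_k\uparrow\infty$, by comparison with $\int x^{-2}\,dx$), giving
\[
\sum_k\frac{\Var(\nu_k^2 Z_k)}{D_k^2}\le M\sum_k\frac{\nu_k^2\,\mathbb{E}(Z_k)}{D_k^2}<\infty.
\]

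For the numerator I would split $\nu_k A_k=\nu_k\,\mathbb{E}(A_k)+\nu_k\big(A_k-\mathbb{E}(A_k)\big)$. The deterministic (bias) part is handled by a Toeplitz/Ces\`aro argument: since $\nu_k\mathbb{E}(A_k)/(\nu_k^2\mathbb{E}(Z_k))\asymp\lambda_k^{-2\gamma}/\big(\nu_k(u_k^2(0)+\sigma^2T\lambda_k^{-2\gamma})\big)\le 1/(\sigma^2 T\nu_k)\to0$ by \eqref{eq:eak}, \eqref{eq:ezk} and assumption~(iv), the ratio $D_N^{-1}\sum_{k=1}^N\nu_k\mathbb{E}(A_k)\to0$. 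The centered part is again controlled by the SLLN with summands $\nu_k(A_k-\mathbb{E}(A_k))$ and normalizer $D_k$: by \eqref{eq:eak} and \eqref{eq:varak} the ratio $\Var(\nu_k A_k)/(\nu_k^2\mathbb{E}(Z_k))=\Var(A_k)/\mathbb{E}(Z_k)\asymp\lambda_k^{-2\gamma}(u_k^2(0)+\sigma^2T\lambda_k^{-2\gamma})/\mu_k(\theta)\to0$, so the same telescoping bound yields $\sum_k\Var(\nu_k A_k)/D_k^2<\infty$ and hence $D_N^{-1}\sum_{k=1}^N\nu_k(A_k-\mathbb{E}(A_k))\to0$ a.s. Combining the two pieces gives the numerator limit, and together with the denominator limit the desired conclusion $\widetilde{\theta}_N\to\theta$ a.s.

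I expect the only genuinely delicate point to be the verification of the two summability conditions $\sum_k\Var(\nu_k^2 Z_k)/D_k^2<\infty$ and $\sum_k\Var(\nu_k A_k)/D_k^2<\infty$. Everything hinges on comparing each variance term to the corresponding mean term $\nu_k^2\mathbb{E}(Z_k)$ and showing the per-term ratio stays bounded (indeed vanishes), which is exactly where assumptions~(ii)--(iv) and the boundedness of $u_k^2(0)+\sigma^2T\lambda_k^{-2\gamma}$ enter through Proposition~\ref{prop:asymp}; the remaining telescoping step is routine, and the bias term poses no difficulty once the order estimates are in place.
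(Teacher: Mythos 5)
Your proposal is correct and follows essentially the same route as the paper's proof: the same decomposition of $\widetilde{\theta}_N-\theta$ into the two factors normalized by $D_N=\sum_{k=1}^N\nu_k^2\,\mathbb{E}(Z_k)$, the same SLLN-plus-telescoping argument (bounding the per-term ratios $\nu_k^2\Var(Z_k)/\mathbb{E}(Z_k)$ and $\Var(A_k)/\mathbb{E}(Z_k)$ via Proposition~\ref{prop:asymp} and assumptions (ii)--(v)), and the same splitting of the numerator into a centered part and a bias part, the latter handled by your Toeplitz argument, which is equivalent to the paper's two-case Stolz--Ces\`aro step. The only blemish is a harmless citation slip: the bound $\Var(A_k)/\mathbb{E}(Z_k)\to 0$ uses \eqref{eq:varak} together with \eqref{eq:ezk}, not \eqref{eq:eak}.
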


\begin{proof}
By \eqref{eq:thetaDif},
\begin{align}\label{eq:thetaDif2}
\widetilde{\theta}_{N}-\theta=-\frac{\sum_{k=1}^{N}\nu_{k}A_{k}}{2\sum_{k=1}^{N}\nu_{k}^{2}\,\mathbb{E}(Z_{k})}\cdot\frac{\sum_{k=1}^{N}\nu_{k}^{2}\,\mathbb{E}(Z_{k})}{\sum_{k=1}^{N}\nu_{k}^{2}Z_{k}}.
\end{align}
We first study the second factor in \eqref{eq:thetaDif2}. Clearly,
\begin{align}
\sum_{N=1}^{\infty}\frac{\nu_{N}^{4}\,\text{Var}(Z_{N})}{\left(\sum_{k=1}^{N}\nu_{k}^{2}\,\mathbb{E}(Z_{k})\right)^{2}}&\leq\frac{\text{Var}(Z_{1})}{\left(\mathbb{E}(Z_{1})\right)^{2}}+\sum_{N=2}^{\infty}\frac{\nu_{N}^{4}\,\text{Var}(Z_{N})}{\sum_{k=1}^{N-1}\nu_{k}^{2}\,\mathbb{E}(Z_{k})\cdot\sum_{k=1}^{N}\nu_{k}^{2}\,\mathbb{E}Z_{k}}\\
\label{eq:DecompVarZNSqSumExpZk} &=\frac{\text{Var}(Z_{1})}{\left(\mathbb{E}(Z_{1})\right)^{2}}\!+\!\!\sum_{N=2}^{\infty}\!\frac{\nu_{N}^{2}\text{Var}(Z_{N})}{\mathbb{E}(Z_{N})}\!\left(\!\frac{1}{\sum_{k=1}^{N-1}\!\nu_{k}^{2}\mathbb{E}(Z_{k})}\!-\!\frac{1}{\sum_{k=1}^{N}\!\nu_{k}^{2}\mathbb{E}(Z_{k})}\!\right).
\end{align}
By \eqref{eq:ezk}, \eqref{eq:varzk} and the assumption (iii), as $N\rightarrow\infty$,
\begin{align}
\frac{\nu_{N}^{2}\,\text{Var}(Z_{N})}{\mathbb{E}(Z_{N})}=O\left(\nu_{N}^{2}\cdot\frac{\frac{\lambda_{N}^{-2\gamma}}{\mu_{N}^{5}(\theta)}\left(u_{N}^{2}(0)+\sigma^{2}T\lambda_{N}^{-2\gamma}\right)^{3}}{\frac{1}{\mu_{N}^{2}(\theta)}\left(u_{N}^{2}(0)+\sigma^{2}T\lambda_{N}^{-2\gamma}\right)^{2}}\right)=O\left(\frac{\lambda_{N}^{-2\gamma}}{\mu_{N}(\theta)}\left(u_{N}^{2}(0)+\sigma^{2}T\lambda_{N}^{-2\gamma}\right)\right).
\end{align}
Since $u_{0}\in H$, we have
\begin{align}\label{eq:LimitSquN}
\lim_{N\rightarrow\infty}u_{N}^{2}(0)=0.
\end{align}
Together with the assumptions (ii), (iv) and (v),
\begin{align}
\lim_{N\rightarrow\infty}\frac{\lambda_{N}^{-2\gamma}}{\mu_{N}(\theta)}\left(u_{N}^{2}(0)+\sigma^{2}T\lambda_{N}^{-2\gamma}\right)=\lim_{N\rightarrow\infty}\frac{1}{\mu_{N}(\theta)\nu_{N}^{\gamma/m}}\left(u_{N}^{2}(0)+\frac{\sigma^{2}T}{\nu_{N}^{\gamma/m}}\right)=0.
\end{align}
Hence, there exists a universal constant $C_{1}>0$ such that
\begin{align}
\frac{\nu_{N}^{2}\,\text{Var}(Z_{N})}{\mathbb{E}(Z_{N})}\leq C_{1}\quad\text{for all }\,N\in\bN,
\end{align}
which, together with \eqref{eq:DecompVarZNSqSumExpZk}, implies that
\begin{align}
\sum_{N=1}^{\infty}\frac{\nu_{N}^{4}\,\text{Var}(Z_{N})}{\left(\sum_{k=1}^{N}\nu_{k}^{2}\,\mathbb{E}(Z_{k})\right)^{2}}&\leq\frac{\text{Var}(Z_{1})}{\left(\mathbb{E}(Z_{1})\right)^{2}}+C_{1}\sum_{N=2}^{\infty}\left(\frac{1}{\sum_{k=1}^{N-1}\nu_{k}^{2}\,\mathbb{E}(Z_{k})}-\frac{1}{\sum_{k=1}^{N}\nu_{k}^{2}\,\mathbb{E}(Z_{k})}\right)\\
\label{eq:EstVarZNSqSumExpZk} &=\frac{\text{Var}(Z_{1})}{\left(\mathbb{E}(Z_{1})\right)^{2}}+\frac{C_{1}}{\nu_{1}^{2}\,\mathbb{E}(Z_{1})}<\infty.
\end{align}
Combining \eqref{eq:zk} with \eqref{eq:EstVarZNSqSumExpZk}, we conclude by Remark \ref{rem:SLLN} that
\begin{align}\label{eq:Consist1}
\lim_{N\rightarrow\infty}\frac{\sum_{k=1}^{N}\nu_{k}^{2}Z_{k}}{\sum_{k=1}^{N}\nu_{k}^{2}\,\mathbb{E}(Z_{k})}=1,\quad\mathbb{P}-\text{a.}\,\text{s.}.
\end{align}

Next, we will analyze the asymptotic behavior of the first factor in \eqref{eq:thetaDif2}. By \eqref{eq:ezk}, \eqref{eq:varak}, \eqref{eq:LimitSquN} and the assumptions (ii), (iv) and (v), as $N\rightarrow\infty$, we get that
\begin{align}
\frac{\text{Var}(A_{N})}{\mathbb{E}(Z_{N})}=O\left(\frac{\frac{\lambda_{N}^{-2\gamma}}{\mu_{N}^{3}(\theta)}\left(u_{N}^{2}(0)+\sigma^{2}T\lambda_{N}^{-2\gamma}\right)^{3}}{\frac{1}{\mu_{N}^{2}(\theta)}\left(u_{N}^{2}(0)+\sigma^{2}T\lambda_{N}^{-2\gamma}\right)^{2}}\right)=O\left(\frac{1}{\mu_{N}(\theta)\nu_{N}^{\gamma/m}}\left(u_{N}^{2}(0)+\frac{\sigma^{2}T}{\nu_{N}^{\gamma/m}}\right)\right)\rightarrow 0.
\end{align}
An argument similar to that of \eqref{eq:DecompVarZNSqSumExpZk} and \eqref{eq:EstVarZNSqSumExpZk} shows that, there exists a universal constant $C_{2}>0$, such that
\begin{align}
\sum_{N=1}^{\infty}\frac{\nu_{N}^{2}\text{Var}(A_{N})}{\left(\sum_{k=1}^{N}\nu_{k}^{2}\,\mathbb{E}(Z_{k})\right)^{2}}&\leq\frac{\text{Var}(A_{1})}{\nu_{1}^{2}\left(\mathbb{E}(Z_{1})\right)^{2}}+\sum_{N=2}^{\infty}\frac{\text{Var}(A_{N})}{\mathbb{E}(Z_{N})}\left(\frac{1}{\sum_{k=1}^{N-1}\nu_{k}^{2}\,\mathbb{E}(Z_{k})}-\frac{1}{\sum_{k=1}^{N}\nu_{k}^{2}\,\mathbb{E}(Z_{k})}\right)\\
\label{eq:EstVarANSqSumExpZk} &\leq\frac{\text{Var}(A_{1})}{\nu_{1}^{2}\left(\mathbb{E}(Z_{1})\right)^{2}}+\frac{C_{2}}{\nu_{1}^{2}\,\mathbb{E}(Z_{1})}<\infty.
\end{align}
In view of  Theorem \ref{thm:SLLN}, \eqref{eq:zk} and \eqref{eq:EstVarANSqSumExpZk} imply that
\begin{align}
\lim_{N\rightarrow\infty}\frac{\sum_{k=1}^{N}\nu_{k}\left(A_{k}-\mathbb{E}(A_{k})\right)}{\sum_{k=1}^{N}\nu_{k}^{2}\,\mathbb{E}(Z_{k})}=0,\quad\mathbb{P}-\text{a.}\,\text{s.}.
\end{align}
If the series $\sum_{k=1}^{N}\nu_{k}\,\mathbb{E}(A_{k})$ converges,  then by \eqref{eq:zk}, we clearly have that
\begin{align}\label{eq:LimitSumExpAkSumExpZk}
\lim_{N\rightarrow\infty}\frac{\sum_{k=1}^{N}\nu_{k}\,\mathbb{E}(A_{k})}{\sum_{k=1}^{N}\nu_{k}^{2}\,\mathbb{E}(Z_{k})}=0,
\end{align}
On the other hand, if the series in the numerator of \eqref{eq:LimitSumExpAkSumExpZk} diverges, then by Stolz--Ces\`{a}ro Theorem
\begin{align}
\lim_{N\rightarrow\infty}\frac{\sum_{k=1}^{N}\nu_{k}\,\mathbb{E}(A_{k})}{\sum_{k=1}^{N}\nu_{k}^{2}\,\mathbb{E}(Z_{k})}=\lim_{N\rightarrow\infty}\frac{\nu_{N}\,\mathbb{E}(A_{N})}{\nu_{N}^{2}\,\mathbb{E}(Z_{N})},
\end{align}
and by \eqref{eq:ezk}, \eqref{eq:eak} and the assumption (iv), as $N\rightarrow\infty$, we deduce that
\begin{align}
\frac{\mathbb{E}(A_{N})}{\nu_{N}\,\mathbb{E}(Z_{N})}=O\left(\frac{\frac{\lambda_{N}^{-2\gamma}}{\mu_{N}^{2}(\theta)}\left(u_{N}^{2}(0)+\sigma^{2}T\lambda_{N}^{-2\gamma}\right)}{\frac{\nu_{N}}{\mu_{N}^{2}(\theta)}\left(u_{N}^{2}(0)+\sigma^{2}T\lambda_{N}^{-2\gamma}\right)^{2}}\right)=O\left(\frac{1}{\nu_{N}\left(u_{N}^{2}(0)\lambda_{N}^{2\gamma}+\sigma^{2}T\right)}\right)=O\left(\nu_{N}^{-1}\right)\rightarrow 0.
\end{align}
Combining the above, we conclude that
\begin{align}\label{eq:Consist2}
\lim_{N\rightarrow\infty}\frac{\sum_{k=1}^{N}\nu_{k}A_{k}}{\sum_{k=1}^{N}\nu_{k}^{2}\,\mathbb{E}(Z_{k})}=0,\quad\mathbb{P}-\text{a.}\,\text{s.}.
\end{align}
Finally, by \eqref{eq:thetaDif2}, \eqref{eq:Consist1} and \eqref{eq:Consist2} we conclude the proof.
\end{proof}

\begin{remark} A note on condition \eqref{eq:zk} is in order. The divergence of the series \eqref{eq:zk} is needed for the Law of Large Numbers to hold true. In view of \eqref{eq:ezk}, the condition \eqref{eq:zk} can be equivalently stated in terms of the known primary objects -- the initial data $u(0)$, the asymptotics  of the eigenvalues of $\cA_{0}$ and $\cA_{1}$, and the covariance structure of the noise (see Proposition \ref{prop:EquiDivConds} below). In particular, the consistency of the TFE does not depend on the regularity of the initial data.
\end{remark}

\subsection{The Asymptotic Normality of TFE}
In this subsection, we will investigate the asymptotic normality of the TFE $\widetilde{\theta}_{N}$. The proof is based on classical  Central Limit Theorem (CLT) for independent random variables with the Lyapunov condition (which is a sufficient condition for the Lindeberg condition to hold). For convenience, we list this result in the Appendix, and the complete proof can be found, for instance, in~\cite[Section III.4]{ShiryaevBookProbability}.

In what follows we will make use of the following technical lemma; the proof is deferred to the Appendix.
\begin{lemma}\label{lemma:MomentEstxi}
Let $\xi_{k}(t)$, $k\in\bN, \ t\in[0,T]$, be defined as in \eqref{eq:notation}. Then, for any $n\in\mathbb{N}$, there exist a constant $D_{n}=D_{n}(t)>0$, depending only on $n$ and $t$, such that, for every $k\in\mathbb{N}$,
\begin{align}
\mathbb{E}\left(\xi_{k}^{n}(t)\right)\leq D_{n}\left(\frac{u_{k}^{2}(0)+\sigma^{2}t\,\lambda_{k}^{-2\gamma}}{\mu_{k}(\theta)}\right)^{n},\quad t\in[0,T].
\end{align}
\end{lemma}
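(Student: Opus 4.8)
The plan is to exploit the explicit representation \eqref{eq:FModeSols}, which writes each Fourier mode as a deterministic mean plus a centered Gaussian fluctuation,
\[
u_{k}(s)=\bar u_{k}(s)+g_{k}(s),\qquad \bar u_{k}(s):=e^{-\mu_{k}(\theta)s}u_{k}(0),\quad g_{k}(s):=\sigma\lambda_{k}^{-\gamma}e^{-\mu_{k}(\theta)s}\int_{0}^{s}e^{\mu_{k}(\theta)r}\,dw_{k}(r).
\]
By the It\^o isometry, $g_{k}(s)$ is a centered Gaussian variable with variance $v_{k}(s)=\sigma^{2}\lambda_{k}^{-2\gamma}(1-e^{-2\mu_{k}(\theta)s})/(2\mu_{k}(\theta))\le\sigma^{2}\lambda_{k}^{-2\gamma}/(2\mu_{k}(\theta))$, where I use that parabolicity guarantees $\mu_{k}(\theta)>0$ for every $k$. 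The key device is to bound the integrand before squaring by $u_{k}^{2}(s)\le 2\bar u_{k}^{2}(s)+2g_{k}^{2}(s)$, so that after integrating in $s$,
\[
\xi_{k}(t)\le 2\int_{0}^{t}\bar u_{k}^{2}(s)\,ds+2\int_{0}^{t}g_{k}^{2}(s)\,ds=:2I_{1,k}(t)+2I_{2,k}(t).
\]
This avoids the cross term $\int_{0}^{t}\bar u_{k}(s)g_{k}(s)\,ds$ entirely; raising to the $n$-th power, using $(a+b)^{n}\le 2^{n-1}(a^{n}+b^{n})$, and taking expectations gives $\mathbb{E}(\xi_{k}^{n}(t))\le 2^{2n-1}\big(I_{1,k}^{n}(t)+\mathbb{E}(I_{2,k}^{n}(t))\big)$.

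For the deterministic term I would integrate first and raise to the power afterwards: since $I_{1,k}(t)=u_{k}^{2}(0)(1-e^{-2\mu_{k}(\theta)t})/(2\mu_{k}(\theta))\le u_{k}^{2}(0)/(2\mu_{k}(\theta))$, one obtains $I_{1,k}^{n}(t)\le u_{k}^{2n}(0)/(2\mu_{k}(\theta))^{n}$, carrying the full $n$-th power of $1/\mu_{k}(\theta)$. For the stochastic term I would apply Jensen's inequality with the probability measure $ds/t$ on $[0,t]$ to move the power inside the integral, $I_{2,k}^{n}(t)\le t^{n-1}\int_{0}^{t}g_{k}^{2n}(s)\,ds$, and then use the Gaussian moment identity $\mathbb{E}(g_{k}^{2n}(s))=(2n-1)!!\,v_{k}^{n}(s)$ together with Tonelli and the variance bound above to get $\mathbb{E}(I_{2,k}^{n}(t))\le (2n-1)!!\,(\sigma^{2}t\lambda_{k}^{-2\gamma})^{n}/(2\mu_{k}(\theta))^{n}$. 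Both contributions now carry $1/\mu_{k}^{n}(\theta)$; collecting them and using $a^{n}+b^{n}\le(a+b)^{n}$ for $a,b\ge 0$ yields the claim with an explicit constant, e.g. $D_{n}=2^{n-1}(2n-1)!!$ (in fact independent of $t$).

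I expect the main subtlety to be the order of operations in estimating the two pieces, which is exactly where the stated power of $\mu_{k}(\theta)$ is won or lost. A naive application of Jensen directly to $\xi_{k}(t)=\int_{0}^{t}u_{k}^{2}(s)\,ds$, bounding $\mathbb{E}(\xi_{k}^{n}(t))\le t^{n-1}\int_{0}^{t}\mathbb{E}(u_{k}^{2n}(s))\,ds$, is too crude: the mean contribution $\int_{0}^{t}e^{-2n\mu_{k}(\theta)s}u_{k}^{2n}(0)\,ds$ produces only a single factor $1/\mu_{k}(\theta)$ instead of the required $1/\mu_{k}^{n}(\theta)$, so the resulting estimate is strictly weaker than the asserted one for large $k$. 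Splitting off the deterministic mean and integrating it \emph{before} taking the power is precisely what recovers the missing $n-1$ factors, whereas on the centered Gaussian part the variance $v_{k}(s)$ already supplies one factor of $1/\mu_{k}(\theta)$ per power, so the direct Jensen estimate is sharp enough there.
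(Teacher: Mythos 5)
Your proof is correct, but it takes a genuinely different route from the paper's. The paper's own argument applies a weighted Cauchy--Schwarz inequality directly to $u_{k}^{2}(s)=e^{-2\mu_{k}(\theta)s}\bigl(u_{k}(0)+\sigma\lambda_{k}^{-\gamma}\int_{0}^{s}e^{\mu_{k}(\theta)r}\,dw_{k}(r)\bigr)^{2}$ so as to factor out $\bigl(u_{k}^{2}(0)+\sigma^{2}\lambda_{k}^{-2\gamma}t\bigr)$ \emph{before} integrating in $s$, and then disposes of the remaining stochastic term $\bigl[\int_{0}^{t}e^{-2\mu_{k}(\theta)s}\bigl(\int_{0}^{s}e^{\mu_{k}(\theta)r}\,dw_{k}(r)\bigr)^{2}ds\bigr]^{n}$ by citing an external estimate (\cite[Theorem 2.1]{Lototsky2009Survey}), which supplies the bound $\widetilde{D}_{n}/\mu_{k}^{n}(\theta)$ and produces the $t$-dependent constant $D_{n}(t)=1+2^{n}t^{-n}\widetilde{D}_{n}$. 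You instead split $u_{k}=\bar{u}_{k}+g_{k}$ into mean and centered Gaussian parts, integrate the deterministic piece exactly, and prove the needed stochastic moment bound from scratch via Jensen with the measure $ds/t$ together with the Gaussian moment identity $\mathbb{E}(g_{k}^{2n}(s))=(2n-1)!!\,v_{k}^{n}(s)$; since $\int_{0}^{t}g_{k}^{2}(s)\,ds$ is, up to the factor $\sigma^{2}\lambda_{k}^{-2\gamma}$, exactly the quantity the paper handles by citation, you have in effect given an elementary, self-contained proof of the cited estimate. What each approach buys: the paper's is shorter on the page; yours avoids the external reference, yields the explicit constant $D_{n}=2^{n-1}(2n-1)!!$ uniform in $t$ (strictly stronger than the stated $t$-dependent constant), and your closing remark correctly identifies the crux shared by both proofs --- the initial-condition contribution must be integrated in time \emph{before} raising to the $n$-th power, or one obtains only $\mu_{k}^{-1}(\theta)$ instead of $\mu_{k}^{-n}(\theta)$. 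One shared caveat: like the paper, you use $\mu_{k}(\theta)>0$ for every $k$ (not merely for large $k$); this is implicit in the parabolicity of the equation and is needed for the right-hand side of the lemma to be a meaningful bound, but it is not literally listed among assumptions (i)--(v).
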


Now we present a version of the large-space asymptotic normality of the TFE $\widetilde{\theta}_{N}$.

\begin{theorem}[Asymptotic Normality of TFE]\label{thm:AsymNormal}
In addition to the conditions of Theorem \ref{thm:Consistency}, assume further that
\begin{align}\label{eq:SumVarAk}
\sum_{k=1}^{\infty}\nu_{k}^{2}\,\emph{Var}\left(A_{k}\right)=\infty.
\end{align}
Then, as $N\rightarrow\infty$,
\begin{align}
\frac{\widetilde{\theta}_{N}-\theta+a_{N}}{b_{N}}\covdist\mathcal{N}(0,1),
\end{align}
where
\begin{align}
a_{N}:=\frac{\sum_{k=1}^{N}\nu_{k}\,\mathbb{E}(A_{k})}{2\sum_{k=1}^{N}\nu_{k}^{2}\,\mathbb{E}(Z_{k})},\quad b_{N}:=\frac{\sqrt{\sum_{k=1}^{N}\nu_{k}^{2}\,\emph{Var}(A_{k})}}{2\sum_{k=1}^{N}\nu_{k}^{2}\,\mathbb{E}(Z_{k})},
\end{align}
and where $\covdist$ denotes the convergence in distribution.
\end{theorem}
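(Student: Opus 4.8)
The plan is to start from the exact decomposition of the estimation error
\begin{align}
\widetilde{\theta}_{N}-\theta+a_{N}
=-\frac{\sum_{k=1}^{N}\nu_{k}\bigl(A_{k}-\mathbb{E}(A_{k})\bigr)}{2\sum_{k=1}^{N}\nu_{k}^{2}Z_{k}}
+a_{N}\left(\frac{\sum_{k=1}^{N}\nu_{k}^{2}\,\mathbb{E}(Z_{k})}{\sum_{k=1}^{N}\nu_{k}^{2}Z_{k}}-1\right),
\end{align}
obtained from \eqref{eq:thetaDif}. Dividing by $b_{N}$, the first term becomes $-\bigl(\sum_{k=1}^{N}\nu_{k}(A_{k}-\mathbb{E}(A_{k}))\bigr)/\sqrt{\sum_{k=1}^{N}\nu_{k}^{2}\Var(A_{k})}$ times the ratio $\sum_{k=1}^{N}\nu_{k}^{2}\mathbb{E}(Z_{k})/\sum_{k=1}^{N}\nu_{k}^{2}Z_{k}$, which converges to $1$ almost surely by \eqref{eq:Consist1} established in the proof of Theorem~\ref{thm:Consistency}. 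So by Slutsky's theorem it suffices to show that the normalized sum of centered $\nu_{k}A_{k}$ converges in distribution to $\mathcal{N}(0,1)$, and that the second term, after division by $b_{N}$, converges to zero in probability.

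For the main CLT term, I would invoke the Lindeberg–Feller CLT via the Lyapunov condition: since the Fourier modes $u_{k}$ are independent, the random variables $\nu_{k}\bigl(A_{k}-\mathbb{E}(A_{k})\bigr)$, $k\in\bN$, are independent with mean zero, and $\sum_{k=1}^{N}\nu_{k}^{2}\Var(A_{k})\to\infty$ by \eqref{eq:SumVarAk}. The Lyapunov condition with exponent $2+\delta$ (I would take $\delta=2$, i.e.\ a fourth-moment condition, since everything is Gaussian-polynomial) requires
\begin{align}
\frac{\sum_{k=1}^{N}\nu_{k}^{4}\,\mathbb{E}\bigl|A_{k}-\mathbb{E}(A_{k})\bigr|^{4}}{\left(\sum_{k=1}^{N}\nu_{k}^{2}\Var(A_{k})\right)^{2}}\longrightarrow 0.
\end{align}
To control the numerator I would bound $\mathbb{E}|A_{k}-\mathbb{E}(A_{k})|^{4}$ using the definition $A_{k}=\tfrac12\xi_{k}^{2}-u_{k}^{2}(0)Y_{k}-\sigma^{2}\lambda_{k}^{-2\gamma}X_{k}+2\mu_{k}(\theta)Z_{k}$ together with Lemma~\ref{lemma:MomentEstxi}: each of $\xi_{k}^{2}$, $Y_{k}$, $X_{k}$, $Z_{k}$ has all moments controlled by powers of $\bigl(u_{k}^{2}(0)+\sigma^{2}T\lambda_{k}^{-2\gamma}\bigr)/\mu_{k}(\theta)$, so Minkowski's inequality gives $\mathbb{E}|A_{k}-\mathbb{E}(A_{k})|^{4}=O\bigl(\lambda_{k}^{-4\gamma}\mu_{k}^{-6}(\theta)(u_{k}^{2}(0)+\sigma^{2}T\lambda_{k}^{-2\gamma})^{6}\bigr)$, i.e.\ $O\bigl((\Var A_{k})^{2}\bigr)$ by \eqref{eq:varak}. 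Then a Stolz–Cesàro / "single-term dominates the partial sum up to a constant" argument of exactly the type used in \eqref{eq:DecompVarZNSqSumExpZk}–\eqref{eq:EstVarZNSqSumExpZk} reduces the Lyapunov ratio to showing $\nu_{N}^{4}(\Var A_{N})^{2}/\bigl(\sum_{k=1}^{N}\nu_{k}^{2}\Var A_{k}\bigr)^{2}\to0$, which follows because $\nu_{N}^{2}\Var(A_{N})/\sum_{k=1}^{N}\nu_{k}^{2}\Var(A_{k})\to0$ (each summand is negligible relative to a divergent partial sum). I expect verifying this Lyapunov bound — in particular tracking the fourth central moment of $A_k$ and checking the ratio vanishes — to be the main technical obstacle, since it requires the moment estimates of Lemma~\ref{lemma:MomentEstxi} applied carefully and a summation-by-parts argument.

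For the remainder term, after dividing by $b_{N}$ it equals
\begin{align}
\frac{\sum_{k=1}^{N}\nu_{k}\,\mathbb{E}(A_{k})}{\sqrt{\sum_{k=1}^{N}\nu_{k}^{2}\Var(A_{k})}}\left(\frac{\sum_{k=1}^{N}\nu_{k}^{2}\,\mathbb{E}(Z_{k})}{\sum_{k=1}^{N}\nu_{k}^{2}Z_{k}}-1\right).
\end{align}
The parenthetical factor tends to $0$ almost surely by \eqref{eq:Consist1}. It therefore suffices to show the prefactor $\sum_{k=1}^{N}\nu_{k}\mathbb{E}(A_{k})/\sqrt{\sum_{k=1}^{N}\nu_{k}^{2}\Var(A_{k})}$ stays bounded (or at worst grows slowly enough that, multiplied by an a.s.\ null sequence, the product still vanishes in probability). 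Using \eqref{eq:eak} and \eqref{eq:varak}, $\bigl(\nu_{k}\mathbb{E}(A_{k})\bigr)^{2}/\bigl(\nu_{k}^{2}\Var(A_{k})\bigr)=O\bigl(\mu_{k}^{-1}(\theta)(u_{k}^{2}(0)+\sigma^{2}T\lambda_{k}^{-2\gamma})^{-1}\bigr)\to0$, and a Cauchy–Schwarz estimate $\bigl(\sum_{k=1}^{N}\nu_{k}\mathbb{E}(A_{k})\bigr)^{2}\le\bigl(\sum_{k=1}^{N}\nu_{k}^{2}\Var(A_{k})\bigr)\bigl(\sum_{k=1}^{N}(\mathbb{E}A_{k})^{2}/\Var(A_{k})\bigr)$ shows the prefactor is $O\bigl(\sqrt{\sum_{k=1}^{N}(\mathbb{E}A_{k})^{2}/\Var(A_{k})}\bigr)$; since $(\mathbb{E}A_{k})^{2}/\Var(A_{k})=O\bigl(\lambda_{k}^{-2\gamma}/\mu_{k}(\theta)\cdot(u_{k}^{2}(0)+\sigma^{2}T\lambda_{k}^{-2\gamma})^{-1}\bigr)$ is summable whenever $\mu_k(\theta)\to\infty$ sufficiently fast — and in the borderline divergent case one falls back on a direct comparison — this term is controlled. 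Combining the three pieces via Slutsky's theorem yields the stated convergence $\bigl(\widetilde{\theta}_{N}-\theta+a_{N}\bigr)/b_{N}\covdist\mathcal{N}(0,1)$.
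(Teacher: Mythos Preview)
Your overall decomposition and strategy match the paper's, and your Lyapunov verification is essentially correct (the paper uses the representation $A_{k}=2\sigma\lambda_{k}^{-\gamma}\int_{0}^{T}\zeta_{k}(t)\xi_{k}(t)\,dt$ together with BDG to bound $\mathbb{E}(A_{k}^{4})$, rather than Minkowski on the individual pieces, but your route can be made to work once you exploit that $A_{k}$ lives in a fixed Wiener chaos so central moments are controlled by powers of the variance).

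There is, however, a genuine gap in your treatment of the remainder term. The prefactor
\[
\frac{\sum_{k=1}^{N}\nu_{k}\,\mathbb{E}(A_{k})}{\sqrt{\sum_{k=1}^{N}\nu_{k}^{2}\,\text{Var}(A_{k})}}
\]
is \emph{not} bounded in general under the hypotheses of the theorem. Your Cauchy--Schwarz bound reduces it to $\sqrt{\sum_{k=1}^{N}(\mathbb{E}A_{k})^{2}/\text{Var}(A_{k})}$, and by \eqref{eq:eak}--\eqref{eq:varak} the summand is $\asymp 1/\mu_{k}(\theta)$ when $u_{k}(0)=0$. But the assumption \eqref{eq:SumVarAk} (equivalently $\sum\nu_{k}^{2}\lambda_{k}^{-8\gamma}/\mu_{k}^{3}(\theta)=\infty$) forces $\mu_{k}$ to grow slowly enough that $\sum 1/\mu_{k}$ typically diverges---for instance in Example~\ref{eg:FracStoHeatEq} the prefactor grows like $N^{1/2-\beta/d}$, which is unbounded whenever $\gamma>0$. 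Multiplying a deterministic sequence that diverges by an almost-surely null random sequence does \emph{not} give convergence to zero in probability without a rate on the latter, and \eqref{eq:Consist1} gives none.

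The paper repairs this by not separating the two factors. It writes the remainder (after removing the harmless ratio $\sum\nu_{k}^{2}\mathbb{E}(Z_{k})/\sum\nu_{k}^{2}Z_{k}\to 1$) as
\[
\frac{\sum_{k=1}^{N}\nu_{k}\,\mathbb{E}(A_{k})}{\sqrt{\sum_{k=1}^{N}\nu_{k}^{2}\,\text{Var}(A_{k})}}\cdot\frac{\sum_{k=1}^{N}\nu_{k}^{2}\bigl(Z_{k}-\mathbb{E}(Z_{k})\bigr)}{\sum_{k=1}^{N}\nu_{k}^{2}\,\mathbb{E}(Z_{k})}
\]
and computes its $L^{2}(\Omega)$ norm directly: by independence this equals
\[
\left(\frac{\sum_{k=1}^{N}\nu_{k}\,\mathbb{E}(A_{k})}{\sum_{k=1}^{N}\nu_{k}^{2}\,\mathbb{E}(Z_{k})}\right)^{2}\cdot\frac{\sum_{k=1}^{N}\nu_{k}^{4}\,\text{Var}(Z_{k})}{\sum_{k=1}^{N}\nu_{k}^{2}\,\text{Var}(A_{k})}.
\]
The second ratio is uniformly bounded by \eqref{eq:varzk}, \eqref{eq:varak} and assumption (iii), while the first ratio tends to $0$ by the argument already used for \eqref{eq:LimitSumExpAkSumExpZk}. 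This gives $L^{2}$ (hence in-probability) convergence to $0$ and closes the gap.
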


\begin{proof} The proof is split in two steps.

\medskip
\noindent
\textit{Step 1.} We will first show that the sequence $\set{\nu_{k}A_{k}}_{k\in\bN}$ satisfies the Lyapunov condition \eqref{eq:Lyapunov} with $\delta=2$. Clearly,
\begin{align}
\mathbb{E}\left(\left(A_{k}-\mathbb{E}(A_{k})\right)^{4}\right)&=\mathbb{E}\left(A_{k}^{4}\right)-4\,\mathbb{E}\left(A_{k}^{3}\right)\mathbb{E}(A_{k})+6\,\mathbb{E}\left(A_{k}^{2}\right)(\mathbb{E}(A_{k}))^{2}-3\left(\mathbb{E}(A_{k})\right)^{4}\\
&=\mathbb{E}\left(A_{k}^{4}\right)-4\,\mathbb{E}\left(A_{k}^{3}\right)\mathbb{E}(A_{k})+6\,\text{Var}(A_{k})\left(\mathbb{E}(A_{k})\right)^{2}+3\left(\mathbb{E}(A_{k})\right)^{4}.
\label{eq:fourthPower}
\end{align}
We will estimate each term in the above expression separately. To begin with, for every $k\in\mathbb{N}$, let $\zeta_{k}:=(\zeta_{k}(t))_{t\in[0,T]}$, where
\begin{align}
\zeta_{k}(t):=\int_{0}^{t}u_{k}(s)\,dw_{k}(s),\quad t\in[0,T].
\end{align}
By \eqref{eq:uk2} and \eqref{eq:notation}, for any $k\in\mathbb{N}$ and $t\in[0,T]$,
\begin{align}
u_{k}^{2}(t)=u_{k}^{2}(0)+\sigma^{2}\lambda_{k}^{-2\gamma}t-2\mu_{k}(\theta)\xi_{k}(t)+2\sigma\lambda_{k}^{-\gamma}\zeta_{k}(t),
\end{align}
which, when multiplied by $\xi_{k}(t)$, and then integrated on $[0,T]$, leads to
\begin{align}
A_{k}=2\sigma\lambda_{k}^{-\gamma}\int_{0}^{T}\zeta_{k}(t)\xi_{k}(t)\,dt,\quad k\in\mathbb{N}.
\end{align}
Hence, by the Cauchy--Schwartz inequality and the definition of $\xi_{k}$, for any $k\in\mathbb{N}$,
\begin{align}
\mathbb{E}\left(A_{k}^{4}\right)&\leq 16\sigma^{4}\lambda_{k}^{-4\gamma}\,\mathbb{E}\left(\left(\int_{0}^{T}\zeta_{k}^{2}(t)\,dt\cdot\int_{0}^{T}\xi_{k}^{2}(t)\,dt\right)^{2}\right)\\
&\leq 16\sigma^{4}\lambda_{k}^{-4\gamma}\left(\mathbb{E}\left(\left(\int_{0}^{T}\zeta_{k}^{2}(t)dt\right)^{4}\right)\mathbb{E}\left(\left(\int_{0}^{T}\xi_{k}^{2}(t)dt\right)^{4}\right)\right)^{1/2}\\
&\leq 16\sigma^{4}\lambda_{k}^{-4\gamma}\left(T^{2}\,\mathbb{E}\left(\left(\int_{0}^{T}\zeta_{k}^{4}(t)\,dt\right)^{2}\right)\cdot T^{4}\,\mathbb{E}\left(\xi_{k}^{8}\right)\right)^{1/2}\\
&\leq 16\sigma^{4}T^{7/2}\lambda_{k}^{-4\gamma}\left(\mathbb{E}\left(\int_{0}^{T}\zeta_{k}^{8}(t)\,dt\right)\mathbb{E}\left(\xi_{k}^{8}\right)\right)^{1/2}.
\end{align}
Moreover, by the Burkholder--Davis--Gundy inequality, there exists a constant $C_{1}=C_{1}(T)>0$, depending only on $T$, such that
\begin{align}
\mathbb{E}\left(\sup_{t\in[0,T]}\zeta_{k}^{8}(t)\right)\leq C_{1}\,\mathbb{E}\left(\left[\zeta_{k},\zeta_{k}\right]^{4}(T)\right)=C_{1}\,\mathbb{E}\left(\xi_{k}^{4}\right).
\end{align}
Together with Lemma \ref{lemma:MomentEstxi}, we obtain that, for any $k\in\mathbb{N}$,
\begin{align}\label{eq:EstAk4Moment}
\mathbb{E}\left(A_{k}^{4}\right)\leq 16\,C_{1}\sigma^{4}\, T^{4}\lambda_{k}^{-4\gamma}\sqrt{\mathbb{E}\left(\xi_{k}^{4}\right)\mathbb{E}\left(\xi_{k}^{8}\right)}\leq C_{2}\,\frac{\left(u_{k}^{2}(0)+\sigma^{2}T\lambda_{k}^{-2\gamma}\right)^{8}}{\mu_{k}^{6}(\theta)},
\end{align}
where $C_{2}:=16\,C_{1}\sqrt{D_{4}D_{8}}T^{2}>0$ is a constant depending only on $T$.

Next, we will study the last three terms of \eqref{eq:fourthPower}. In view of \eqref{eq:eak} and \eqref{eq:varak}, there exists a universal constant $C_{3}>0$, such that for any $k\in\mathbb{N}$,
\begin{align}\label{eq:EstAkVarAk}
\mathbb{E}(A_{k})\leq C_{3}\,\frac{\left(u_{k}^{2}(0)+\sigma^{2}T\lambda_{k}^{-2\gamma}\right)^{2}}{\mu_{k}^{2}(\theta)},\quad\text{Var}(A_{k})\leq C_{2}\,\frac{\left(u_{k}^{2}(0)+\sigma^{2}T\lambda_{k}^{-2\gamma}\right)^{4}}{\mu_{k}^{3}(\theta)}.
\end{align}
Hence, it suffices to estimate $\mathbb{E}(A_{k}^{3})$. By the definition of $A_{k}$ in \eqref{eq:thetaDif},
\begin{equation}
-u_{k}^{2}(0)Y_{k}-\sigma^{2}\lambda_{k}^{-2\gamma}X_{k}\leq A_{k}\leq\frac{1}{2}\xi_{k}^{2}+2\mu_{k}(\theta)Z_{k}.
\end{equation}
Moreover, since $\xi_{k}(t)$ is increasing in $t$, we deduce that
\begin{align}
Y_{k}=\int_{0}^{T}\xi_{k}(t)\,dt\leq T\xi_{k},\quad X_{k}=\int_{0}^{T}t\xi_{k}(t)\,dt\leq T^{2}\xi_{k},\quad Z_{k}=\int_{0}^{T}\xi_{k}^{2}(t)\,dt\leq T\xi_{k}^{2},
\end{align}
and thus,
\begin{align}
-T\left(u_{k}^{2}(0)+\sigma^{2}T\lambda_{k}^{-2\gamma}\right)\xi_{k}\leq A_{k}\leq\left(\frac{1}{2}+2\mu_{k}(\theta)T\right)\xi_{k}^{2}.
\end{align}
Together with Lemma \ref{lemma:MomentEstxi}, we obtain that
\begin{align}\label{eq:EstAk3Moment}
- D_{3}T\frac{\left(u_{k}^{2}(0)+\sigma^{2}T\lambda_{k}^{-2\gamma}\right)^{6}}{\mu_{k}^{3}(\theta)}\leq\mathbb{E}(A_{k}^{3})\leq D_{6}(2T+1)\frac{\left(u_{k}^{2}(0)+\sigma^{2}T\lambda_{k}^{-2\gamma}\right)^{6}}{\mu_{k}^{3}(\theta)}.
\end{align}

Combining \eqref{eq:fourthPower}--\eqref{eq:EstAk3Moment}, we conclude that there exists a constant $C_{4}=C_{4}(T)>0$, depending only on $T$, such that for any $k\in\mathbb{N}$,
\begin{align}
\mathbb{E}\left(\left(\nu_{k}A_{k}-\mathbb{E}(\nu_{k}A_{k})\right)^{4}\right)\leq C_{4}\,\frac{\nu_{k}^{4}}{\mu_{k}^{5}(\theta)}\left(u_{k}^{2}(0)+\sigma^{2}T\lambda_{k}^{-2\gamma}\right)^{8}.
\end{align}
On the other hand, by \eqref{eq:varak} again, we can find a universal constant $C_{5}>0$, such that
\begin{align}
\text{Var}\left(\nu_{k}A_{k}\right)\geq C_{5}\frac{\nu_{k}^{2}}{\mu_{k}^{3}(\theta)}\left(u_{k}^{2}(0)+\sigma^{2}T\lambda_{k}^{-2\gamma}\right)^{4},
\quad \text{for all }\,k\in\mathbb{N}.
\end{align}
In  view of the assumptions (ii)-(v), and since $\lim_{k\rightarrow\infty}u_{k}^{2}(0)=0$, we deduce that there exists a constant $C_{6}=C_{6}(c_{0},\sigma,T)>0$, depending on $c_{0}$, $\sigma$ and $T$, such that
\begin{align}
\mathbb{E}\left(\left(\nu_{k}A_{k}-\mathbb{E}(\nu_{k}A_{k})\right)^{4}\right)\leq C_{6}\text{Var}\left(\nu_{k}A_{k}\right),\quad\text{for all }\,k\in\mathbb{N}.
\end{align}
Finally, by \eqref{eq:SumVarAk}, we obtain that
\begin{align}
\lim_{N\rightarrow\infty}\frac{\sum_{k=1}^{N}\mathbb{E}\left(\left(\nu_{k}A_{k}-\mathbb{E}(\nu_{k}A_{k})\right)^{4}\right)}{\left(\sum_{k=1}^{N}\text{Var}\left(\nu_{k}A_{k}\right)\right)^{2}}\leq\lim_{N\rightarrow\infty}\frac{C_{6}}{\sum_{k=1}^{N}\text{Var}\left(\nu_{k}A_{k}\right)}=0.
\end{align}

\smallskip
\noindent
\textit{Step 2:} Note that
\begin{align}\label{eq:expandofAll}
\frac{\widetilde{\theta}_{N}-\theta+a_{N}}{b_{N}}=-\frac{\sum_{k=1}^{N}\nu_{k}\left(A_{k}-\mathbb{E}(A_{k})\right)}{2\,b_{N}\sum_{k=1}^{N}\nu_{k}^{2}Z_{k}}-\frac{\sum_{k=1}^{N}\nu_{k}\,\mathbb{E}(A_{k})}{2\,b_{N}\sum_{k=1}^{N}\nu_{k}^{2}Z_{k}}+\frac{a_{N}}{b_{N}}.
\end{align}
For the first term in \eqref{eq:expandofAll}, by \eqref{eq:Consist1}, Step 1 and Theorem \ref{thm:LyapunovCLT}, as $N\rightarrow\infty$, we get
\begin{align}\label{eq:CLTLyapunov}
-\frac{\sum_{k=1}^{N}\nu_{k}\left(A_{k}-\mathbb{E}(A_{k})\right)}{2\,b_{N}\sum_{k=1}^{N}\nu_{k}^{2}Z_{k}}=-\frac{\sum_{k=1}^{N}\nu_{k}^{2}\,\mathbb{E}(Z_{k})}{\sum_{k=1}^{N}\nu_{k}^{2}Z_{k}}\cdot\frac{\sum_{k=1}^{N}\nu_{k}\left(A_{k}-\mathbb{E}(A_{k})\right)}{\sqrt{\sum_{k=1}^{N}\nu_{k}^{2}\,\text{Var}(A_{k})}}\covdist\mathcal{N}(0,1).
\end{align}
Moreover, for the last two terms in \eqref{eq:expandofAll}, we derive that
\begin{align}
\frac{a_{N}}{b_{N}}-\frac{\sum_{k=1}^{N}\nu_{k}\,\mathbb{E}(A_{k})}{2\,b_{N}\sum_{k=1}^{N}\nu_{k}^{2}Z_{k}}&=\frac{2\sum_{k=1}^{N}\nu_{k}^{2}\,\mathbb{E}(Z_{k})}{\sqrt{\sum_{k=1}^{N}\nu_{k}^{2}\,\text{Var}(A_{k})}}\left(\frac{\sum_{k=1}^{N}\nu_{k}\,\mathbb{E}(A_{k})}{2\sum_{k=1}^{N}\nu_{k}^{2}\,\mathbb{E}(Z_{k})}-\frac{\sum_{k=1}^{N}\nu_{k}\,\mathbb{E}(A_{k})}{2\sum_{k=1}^{N}\nu_{k}^{2}Z_{k}}\right)\\
\label{eq:expandOfTail} &=\frac{\sum_{k=1}^{N}\nu_{k}^{2}\,\mathbb{E}(Z_{k})}{\sum_{k=1}^{N}\nu_{k}^{2}Z_{k}}\cdot\frac{\sum_{k=1}^{N}\nu_{k}\,\mathbb{E}(A_{k})}{\sqrt{\sum_{k=1}^{N}\nu_{k}^{2}\,\text{Var}(A_{k})}}\cdot\frac{\sum_{k=1}^{N}\nu_{k}^{2}\left(Z_{k}-\mathbb{E}(Z_{k})\right)}{\sum_{k=1}^{N}\nu_{k}^{2}\,\mathbb{E}(Z_{k})}.
\end{align}
In light of \eqref{eq:Consist1}, we only need to show that the product of the last two factors above converges to zero in probability, as $N\rightarrow\infty$. Note that, by the independence of $Z_{k}$, $k\in\mathbb{N}$,
\begin{align}
\mathbb{E}\!\left(\!\left(\frac{\sum_{k=1}^{N}\nu_{k}\,\mathbb{E}(A_{k})}{\sqrt{\sum_{k=1}^{N}\nu_{k}^{2}\,\text{Var}(A_{k})}}\cdot\frac{\sum_{k=1}^{N}\nu_{k}^{2}\left(Z_{k}-\mathbb{E}(Z_{k})\right)}{\sum_{k=1}^{N}\nu_{k}^{2}\,\mathbb{E}(Z_{k})}\right)^{2}\right)=\left(\frac{\sum_{k=1}^{N}\nu_{k}\,\mathbb{E}(A_{k})}{\sum_{k=1}^{N}\nu_{k}^{2}\,\mathbb{E}(Z_{k})}\right)^{2}\frac{\sum_{k=1}^{N}\nu_{k}^{4}\,\text{Var}(Z_{k})}{\sum_{k=1}^{N}\nu_{k}^{2}\,\text{Var}(A_{k})}.
\end{align}
By \eqref{eq:varzk}, \eqref{eq:varak} and the assumption (iii), there exists a universal constant $C_{7}>0$, such that
\begin{align}
\frac{\sum_{k=1}^{N}\nu_{k}^{4}\,\text{Var}(Z_{k})}{\sum_{k=1}^{N}\nu_{k}^{2}\,\text{Var}(A_{k})}&\leq C_{7}\frac{\sum_{k=1}^{N}\frac{\nu_{k}^{4}\lambda_{k}^{-2\gamma}}{\mu_{k}^{5}(\theta)}(u_{k}^{2}(0)+\sigma^{2}T\lambda_{k}^{-2\gamma})^3}{\sum_{k=1}^{N}\frac{\nu_{k}^{2}\lambda_{k}^{-2\gamma}}{\mu_{k}^{3}(\theta)}(u_{k}^{2}(0)+\sigma^{2}T\lambda_{k}^{-2\gamma})^3}\leq C_{7}c_{0},
\end{align}
Similarly, by \eqref{eq:ezk}, \eqref{eq:eak} and the assumption (iii),
\begin{align}\label{eq:RatioSumAkSumZk}
\frac{\sum_{k=1}^{N}\nu_{k}\,\mathbb{E}(A_{k})}{\sum_{k=1}^{N}\nu_{k}^{2}\,\mathbb{E}(Z_{k})}&\leq C_{8}\frac{\sum_{k=1}^{N}\frac{\nu_{k}\lambda_{k}^{-2\gamma}}{\mu_{k}^{2}(\theta)}(u_{k}^{2}(0)+\sigma^{2}T\lambda_{k}^{-2\gamma})}{\sum_{k=1}^{N}\frac{\nu_{k}^{2}}{\mu_{k}^{2}(\theta)}(u_{k}^{2}(0)+\sigma^{2}T\lambda_{k}^{-2\gamma})^{2}},
\end{align}
where $C_{8}>0$ is some universal constant. Using \eqref{eq:ezk} and \eqref{eq:zk}, we conclude that the series in the denominator on the right-hand side of \eqref{eq:RatioSumAkSumZk} diverges. Hence, the right-hand side of \eqref{eq:RatioSumAkSumZk} converges to 0, as $N\rightarrow\infty$, if the series in the numerator on the right-hand side of \eqref{eq:RatioSumAkSumZk} converges. Now assume that the numerator on the right-hand side of \eqref{eq:RatioSumAkSumZk} diverges. By Stolz--Ces\`{a}ro Theorem,
\begin{align}
\lim_{N\rightarrow\infty}\frac{\sum_{k=1}^{N}\frac{\nu_{k}\lambda_{k}^{-2\gamma}}{\mu_{k}^{2}(\theta)}\left(u_{k}^{2}(0)+\sigma^{2}T\lambda_{k}^{-2\gamma}\right)}{\sum_{k=1}^{N}\frac{\nu_{k}^{2}}{\mu_{k}^{2}(\theta)}\left(u_{k}^{2}(0)+\sigma^{2}T\lambda_{k}^{-2\gamma}\right)^{2}}=\lim_{N\rightarrow\infty}\frac{\frac{\nu_{N}\lambda_{N}^{-2\gamma}}{\mu_{N}^{2}(\theta)}\left(u_{N}^{2}(0)+\sigma^{2}T\lambda_{N}^{-2\gamma}\right)}{\frac{\nu_{N}^{2}}{\mu_{N}^{2}(\theta)}\left(u_{N}^{2}(0)+\sigma^{2}T\lambda_{N}^{-2\gamma}\right)^{2}}\leq\lim_{N\rightarrow\infty}\frac1{\sigma^{2}T\nu_{N}}=0.
\end{align}
Therefore, we have shown that
\begin{align}\label{eq:L2CovCLT}
\frac{\sum_{k=1}^{N}\nu_{k}\,\mathbb{E}(A_{k})}{\sqrt{\sum_{k=1}^{N}\nu_{k}^{2}\,\text{Var}(A_{k})}}\cdot\frac{\sum_{k=1}^{N}\nu_{k}^{2}\left(Z_{k}-\mathbb{E}(Z_{k})\right)}{\sum_{k=1}^{N}\nu_{k}^{2}\,\mathbb{E}(Z_{k})}\rightarrow 0\quad\text{in }\,L^{2}(\Omega),\quad N\rightarrow\infty.
\end{align}
Combining \eqref{eq:Consist1}, \eqref{eq:expandofAll}, \eqref{eq:expandOfTail} and \eqref{eq:L2CovCLT} completes the proof.
\end{proof}

 The next result provides some equivalent conditions, given explicitly in terms of the model coefficients, for \eqref{eq:zk} and \eqref{eq:SumVarAk} to hold. In particular, we note that the consistency and the asymptotic normality of the TFE do not depend on the regularity of the initial data.
\begin{proposition}\label{prop:EquiDivConds}
Under the assumptions \emph{(i) - (v)},
\begin{align}\label{eq:EquiCondExpZk} \sum_{k=1}^{\infty}\nu_{k}^{2}\,\mathbb{E}(Z_{k})=\infty\quad&\Leftrightarrow\quad\sum_{k=}^{\infty}\frac{\nu_{k}^{2}\lambda_{k}^{-4\gamma}}{\mu_{k}^{2}(\theta)}=\infty,\\
\label{eq:EquiCondVarAk}
\sum_{k=1}^{\infty}\nu_{k}^{2}\,\emph{Var}(A_{k})=\infty\quad&\Leftrightarrow\quad\sum_{k=1}^{\infty}\frac{\nu_{k}^{2}\lambda_{k}^{-8\gamma}}{\mu_{k}^{3}(\theta)}=\infty.
\end{align}
\end{proposition}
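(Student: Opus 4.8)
The plan is to reduce each equivalence to a comparison of nonnegative series using the two-sided bounds of Proposition~\ref{prop:asymp}. Since $a_{k}\asymp b_{k}$ forces the two series $\sum a_{k}$ and $\sum b_{k}$ to converge or diverge together, \eqref{eq:ezk} and \eqref{eq:varak} let me replace $\nu_{k}^{2}\,\mathbb{E}(Z_{k})$ and $\nu_{k}^{2}\,\text{Var}(A_{k})$ by their leading orders $\frac{\nu_{k}^{2}}{\mu_{k}^{2}(\theta)}(u_{k}^{2}(0)+\sigma^{2}T\lambda_{k}^{-2\gamma})^{2}$ and $\frac{\nu_{k}^{2}\lambda_{k}^{-2\gamma}}{\mu_{k}^{3}(\theta)}(u_{k}^{2}(0)+\sigma^{2}T\lambda_{k}^{-2\gamma})^{3}$, respectively. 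Thus both claims amount to showing that the divergence of these series is governed entirely by the ``pure-noise'' tail $\lambda_{k}^{-4\gamma}$, resp. $\lambda_{k}^{-8\gamma}$, and is insensitive to the initial data $u_{k}^{2}(0)$.

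For the implication ``$\Leftarrow$'' in both lines I would simply discard the nonnegative initial-data contribution: $(u_{k}^{2}(0)+\sigma^{2}T\lambda_{k}^{-2\gamma})^{2}\ge\sigma^{4}T^{2}\lambda_{k}^{-4\gamma}$, and likewise with exponent $3$ and $\lambda_{k}^{-6\gamma}$, so that after multiplying by the respective prefactor the full series dominates a fixed positive multiple of the right-hand series term by term, and divergence transfers at once.

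The substance lies in ``$\Rightarrow$'', which I would establish by contraposition: assuming the right-hand (pure-noise) series is finite, I show the full series is finite. The elementary inequality $(a+b)^{n}\le 2^{n-1}(a^{n}+b^{n})$ with $a=u_{k}^{2}(0)$, $b=\sigma^{2}T\lambda_{k}^{-2\gamma}$, and $n=2$ (resp. $n=3$) splits each term into a pure-noise piece, which is a constant times the right-hand series and hence finite by hypothesis, and a pure-initial-data piece $\frac{\nu_{k}^{2}}{\mu_{k}^{2}(\theta)}u_{k}^{4}(0)$ (resp. $\frac{\nu_{k}^{2}\lambda_{k}^{-2\gamma}}{\mu_{k}^{3}(\theta)}u_{k}^{6}(0)$). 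It then remains to show these pure-initial-data series converge. Here I would use assumption (iii) to bound $\nu_{k}^{2}/\mu_{k}^{2}(\theta)\le c_{0}^{2}$ for $k\ge J$ (and, for the cubic case, also that $\lambda_{k}^{-2\gamma}/\mu_{k}(\theta)$ stays bounded, by assumptions (ii) and (iv)); together with \eqref{eq:LimitSquN}, which gives $u_{k}^{4}(0)\le u_{k}^{2}(0)$ and $u_{k}^{6}(0)\le u_{k}^{2}(0)$ for $k$ large, each pure-initial-data series collapses to a constant multiple of $\sum_{k}u_{k}^{2}(0)=\norm{u_{0}}_{H}^{2}<\infty$. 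Finitely many small-index terms are harmless throughout.

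The main obstacle is purely bookkeeping rather than conceptual: after the binomial split one must verify that every prefactor multiplying a power of $u_{k}^{2}(0)$ is uniformly bounded in $k$ using precisely the combination of assumptions (ii)--(iv) available, so that each such term is controlled by the summable sequence $u_{k}^{2}(0)$. No genuinely new estimate beyond Proposition~\ref{prop:asymp} is needed, and the computation transparently explains the accompanying remark that consistency and asymptotic normality of the TFE do not depend on the regularity of the initial datum $u_{0}$.
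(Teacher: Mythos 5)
Your proposal is correct and follows essentially the same route as the paper's proof: both reduce the statement to explicit series via Proposition~\ref{prop:asymp}, get ``$\Leftarrow$'' by discarding the nonnegative initial-data term, and get ``$\Rightarrow$'' by separating the $u_{k}^{2}(0)$-contributions and showing they are summable using assumptions (ii)--(iv) together with $\sum_{k}u_{k}^{2}(0)=\|u_{0}\|_{H}^{2}<\infty$. The only cosmetic differences are that you argue by contraposition with the bound $(a+b)^{n}\le 2^{n-1}(a^{n}+b^{n})$, whereas the paper expands the cube exactly and concludes that the pure-noise series must carry the divergence; the substance is identical.
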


\begin{proof}
We will only present the proof for \eqref{eq:EquiCondVarAk}, as \eqref{eq:EquiCondExpZk} can be obtained similarly. Clearly, \eqref{eq:varak} implies the ``$\Leftarrow$" direction in \eqref{eq:EquiCondVarAk}. Now assume that
\begin{align}
\sum_{k=1}^{\infty}\nu_{k}^{2}\,\text{Var}(A_{k})=\infty,
\end{align}
which, by \eqref{eq:varak}, is equivalent to
\begin{align}
\infty=\sum_{k=1}^{\infty}\frac{\nu_{k}^{2}\lambda_{k}^{-2\gamma}}{\mu_{k}^{3}(\theta)}\left(u_{k}^{2}(0)+\sigma^{2}T\lambda_{k}^{-2\gamma}\right)^{3}&=\sum_{k=1}^{\infty}\frac{\nu_{k}^{2}\lambda_{k}^{-2\gamma}u_{k}^{6}(0)}{\mu_{k}^{3}(\theta)}+3\sigma^{2}T\sum_{k=1}^{\infty}\frac{\nu_{k}^{2}\lambda_{k}^{-4\gamma}u_{k}^{4}(0)}{\mu_{k}^{3}(\theta)}\\
&\quad\,\,+3\sigma^{4}T^{2}\sum_{k=1}^{\infty}\frac{\nu_{k}^{2}\lambda_{k}^{-6\gamma}u_{k}^{2}(0)}{\mu_{k}^{3}(\theta)}+\sigma^{6}T^{3}\sum_{k=1}^{\infty}\frac{\nu_{k}^{2}\lambda_{k}^{-8\gamma}}{\mu_{k}^{3}(\theta)}.
\end{align}
Hence, it suffices to show that the first three series on the right-hand side above are all convergent. We will only check the first series, and the other two can be verified using a similar argument. Indeed, by the assumptions (ii) - (v) and since $u(0)\in H$ (so that $\lim_{k\rightarrow\infty}u_{k}(0)=0$), there exists a universal constant $C>0$ such that
\begin{align}
\sum_{k=1}^{\infty}\frac{\nu_{k}^{2}\lambda_{k}^{-2\gamma}u_{k}^{6}(0)}{\mu_{k}^{3}(\theta)}\leq c_{0}^{2}\,C\sum_{k=1}^{\infty}u_{k}^{2}(0)<\infty.
\end{align}
This concludes the proof.
\end{proof}

 We conclude this section by providing the asymptotically equivalent formulas for the sequences $\{a_{N}\}_{N\in\mathbb{N}}$ and $\{b_{N}\}_{N\in\mathbb{N}}$ in Theorem~\ref{thm:AsymNormal}, given in terms of the model coefficients. In light of Proposition~\ref{prop:EquiDivConds}, the relations \eqref{eq:zk} and \eqref{eq:SumVarAk} imply that each of the last terms in \eqref{eq:ezkExact}--\eqref{eq:varakExact} give the exact leading order term for $\mathbb{E}(Z_{k})$, $\text{Var}(Z_{k})$, $\mathbb{E}(A_{k})$ and $\text{Var}(A_{k})$, respectively. The following result follows immediately from Stolz--Ces\`{a}ro Theorem.
\begin{corollary}
Under the conditions of Theorem \ref{thm:AsymNormal}, as $N\rightarrow\infty$, we have
\begin{align}
a_{N}\sim \frac3{T}\frac{\sum_{k=1}^{N}\frac{\nu_{k}\lambda_{k}^{-4\gamma}}{\mu_{k}^{2}(\theta)}}{\sum_{k=1}^{N}\frac{\nu_{k}^{2}\lambda_{k}^{-4\gamma}}{\mu_{k}^{2}(\theta)}},\quad b_{N}\sim \sqrt{\frac{12}{5T}}\frac{\sqrt{\sum_{k=1}^{N}\frac{\nu_{k}^{2}\lambda_{k}^{-8\gamma}}{\mu_{k}^{3}(\theta)}}}{\sum_{k=1}^{N}\frac{\nu_{k}^{2}\lambda_{k}^{-4\gamma}}{\mu_{k}^{2}(\theta)}}.
\end{align}
\end{corollary}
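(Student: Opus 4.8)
The plan is to feed the exact leading-order asymptotics \eqref{eq:ezkExact}, \eqref{eq:eakExact} and \eqref{eq:varakExact} into the three sums $\sum_{k=1}^{N}\nu_{k}^{2}\,\mathbb{E}(Z_{k})$, $\sum_{k=1}^{N}\nu_{k}\,\mathbb{E}(A_{k})$ and $\sum_{k=1}^{N}\nu_{k}^{2}\,\text{Var}(A_{k})$ that define $a_{N}$ and $b_{N}$, and to argue that in each case only the final, $u_{k}(0)$-free term of the corresponding expansion survives asymptotically. Writing $\widetilde{Q}_{N}:=\sum_{k=1}^{N}\nu_{k}^{2}\lambda_{k}^{-4\gamma}/\mu_{k}^{2}(\theta)$, $\widetilde{P}_{N}:=\sum_{k=1}^{N}\nu_{k}\lambda_{k}^{-4\gamma}/\mu_{k}^{2}(\theta)$ and $\widetilde{V}_{N}:=\sum_{k=1}^{N}\nu_{k}^{2}\lambda_{k}^{-8\gamma}/\mu_{k}^{3}(\theta)$, the target reduces to the three equivalences $\sum_{k=1}^{N}\nu_{k}^{2}\,\mathbb{E}(Z_{k})\sim\frac{\sigma^{4}T^{3}}{12}\widetilde{Q}_{N}$, $\sum_{k=1}^{N}\nu_{k}\,\mathbb{E}(A_{k})\sim\frac{\sigma^{4}T^{2}}{2}\widetilde{P}_{N}$ and $\sum_{k=1}^{N}\nu_{k}^{2}\,\text{Var}(A_{k})\sim\frac{\sigma^{8}T^{5}}{15}\widetilde{V}_{N}$. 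Substituting these into the definitions of $a_{N}$ and $b_{N}$ and simplifying the numerical constants (in particular $6/\sqrt{15}=\sqrt{12/5}$, so that $b_{N}\sim\frac{6}{\sqrt{15}}T^{-1/2}\sqrt{\widetilde{V}_{N}}/\widetilde{Q}_{N}=\sqrt{12/(5T)}\,\sqrt{\widetilde{V}_{N}}/\widetilde{Q}_{N}$) then yields the two claimed formulas.

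For each of the three sums I would split the expansion into the terms carrying a power of $u_{k}(0)$ and the single term that does not. The $u_{k}(0)$-terms, once weighted by $\nu_{k}^{2}$ or $\nu_{k}$ and summed over all $k$, produce \emph{convergent} series: this is precisely the estimate already performed in the proof of Proposition~\ref{prop:EquiDivConds}, which uses assumption (iii) (giving $\nu_{k}/\mu_{k}(\theta)\le c_{0}$), the boundedness of $\lambda_{k}^{-2\gamma}=\nu_{k}^{-\gamma/m}$, and $u(0)\in H$ (so $\sum_{k}u_{k}^{2}(0)<\infty$). By contrast, the partial sums of the $u_{k}(0)$-free terms are exactly $\widetilde{Q}_{N}$, $\widetilde{P}_{N}$, $\widetilde{V}_{N}$, and these diverge: $\widetilde{Q}_{N}\to\infty$ by \eqref{eq:zk} together with \eqref{eq:EquiCondExpZk}, and $\widetilde{V}_{N}\to\infty$ by \eqref{eq:SumVarAk} together with \eqref{eq:EquiCondVarAk}. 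Since each full sum is a convergent ($O(1)$) contribution plus a constant multiple of a divergent partial sum, the divergent piece dominates and the stated equivalences follow.

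The main obstacle is concentrated in two delicate points. First, because the ``$\sim$'' in \eqref{eq:ezkExact}--\eqref{eq:varakExact} is only a leading-order statement, I cannot take termwise ratios inside the sums: the $u_{k}(0)$-contributions may dominate term by term for infinitely many $k$ even while remaining summable. The right device is a Toeplitz/Ces\`{a}ro argument: if $w_{k}\ge 0$ with $\sum_{k=1}^{N}w_{k}\to\infty$ and $\epsilon_{k}\to 0$, then $\sum_{k=1}^{N}w_{k}\epsilon_{k}=o\big(\sum_{k=1}^{N}w_{k}\big)$; applied with $w_{k}$ the $u_{k}(0)$-free summand, this absorbs the $o(1)$ errors carried by the asymptotic equivalences. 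Second, and more subtle, for $a_{N}$ I must know that $\widetilde{P}_{N}\to\infty$, for otherwise the convergent $u_{k}(0)$-part of $\sum_{k=1}^{N}\nu_{k}\,\mathbb{E}(A_{k})$ would not be negligible and the constant $3/T$ would be wrong; this divergence is \emph{not} immediate from \eqref{eq:zk}, but follows from \eqref{eq:SumVarAk} via the comparison $\nu_{k}^{2}\lambda_{k}^{-8\gamma}/\mu_{k}^{3}(\theta)\le c_{0}\,\nu_{k}\lambda_{k}^{-4\gamma}/\mu_{k}^{2}(\theta)$ for large $k$ (using (iii) and $\lambda_{k}^{-4\gamma}\le 1$ eventually), so that divergence of $\widetilde{V}_{N}$ forces divergence of $\widetilde{P}_{N}$. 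Once these two reductions are secured, the ratio computation and the constant-matching are routine, which is the sense in which the result ``follows immediately from Stolz--Ces\`{a}ro.''
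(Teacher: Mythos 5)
Your proof is correct and takes essentially the same approach as the paper, whose entire argument is the sentence preceding the corollary: by Remark~\ref{rem:asymp} and Proposition~\ref{prop:EquiDivConds}, the $u_{k}(0)$-free last terms in \eqref{eq:ezkExact}--\eqref{eq:varakExact} dominate the three sums defining $a_{N}$ and $b_{N}$, after which the constants are matched via Stolz--Ces\`{a}ro. The two delicate points you isolate are precisely the details the paper's one-line justification suppresses, and you resolve them correctly: termwise domination of the last terms can fail (since $u_{k}^{2}(0)\lambda_{k}^{2\gamma}$ need not vanish even though $u(0)\in H$), so one must split off the summable $u_{k}(0)$-contributions and absorb the multiplicative $o(1)$ errors with a Toeplitz/Ces\`{a}ro lemma over the divergent remainder, and the divergence of $\sum_{k=1}^{N}\nu_{k}\lambda_{k}^{-4\gamma}/\mu_{k}^{2}(\theta)$ needed for the $a_{N}$ formula indeed follows from \eqref{eq:SumVarAk} via your comparison based on assumption (iii).
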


\section{Examples}\label{sec:Examples}

In this section, we will present two examples that illustrate the theoretical results obtained in Section~\ref{sec:mainResults}. Throughout this section, let $G\subseteq\mathbb{R}^{d}$ be a smooth and bounded domain, $H:=L^{2}(G)$, and let $\Delta$ be the Laplace operator on $G$ with zero boundary condition. It is known (cf.~\cite{Shubin}) that $\Delta$ has a complete orthonormal system of eigenfunctions $\{h_{k}\}_{k\in\mathbb{N}}$ in $H$. Moreover, the corresponding eigenvalues $\{\tau_{k}\}_{k\in\mathbb{N}}$ can be arranged such that $0\leq -\tau_{1}\leq -\tau_{2}\leq\cdots$, and there exists a universal constant $c_{1}>0$ so that
\begin{align}
\lim_{k\rightarrow\infty}\left|\tau_{k}\right|\cdot k^{-2/d}=c_{1}.
\end{align}
\begin{example}\label{eg:FracStoHeatEq}
Consider the following fractional stochastic heat equation driven by an additive noise, possibly colored in space,
\begin{align}
du(t,x)+\theta(-\Delta)^{\beta}u(t,x)\,dt=\sigma\sum_{k=1}^{\infty}\lambda_{k}^{-\gamma}h_{k}(x)\,dw_{k}(t),\quad t\in[0,T],\quad x\in G,
\end{align}
with initial condition $u(0,x)=u_{0}(x)\in H$, where $\theta>0$, $\beta>0$, $\gamma\geq 0$ and $\sigma\in\mathbb{R}\setminus\{0\}$ are constants, and where $\lambda_{k}:=\sqrt{-\tau_{k}}$, $k\in\mathbb{N}$. In this case, $\rho_{k}=0$ for all $k\in\mathbb{N}$, and
\begin{align}
\nu_{k}\sim c_{1}k^{2\beta/d},\quad\mu_{k}(\theta)\sim c_{1}\theta\,k^{2\beta/d},\quad\lambda_{k}\sim\sqrt{c_{1}}\,k^{1/d},\quad k\rightarrow\infty.
\end{align}
Together with Proposition \ref{prop:EquiDivConds}, the conditions \eqref{eq:zk} and \eqref{eq:SumVarAk} are equivalent to
\begin{align}
\frac{1}{c_{1}^{2\gamma}\theta^{2}}\sum_{k=1}^{\infty}\frac{1}{k^{4\gamma/d}}
=\infty,
\quad\text{and}\quad\frac{1}{c_{1}^{1+4\gamma}\theta^{3}}\sum_{k=1}^{\infty}\frac{1}{k^{(2\beta+8\gamma)/d}}=\infty,
\end{align}
respectively. Therefore, the consistency and the asymptotic normality hold for the TFE $\tilde{\theta}_N$ given by \eqref{eq:thetaMain}, whenever
\begin{align}
2\beta+8\gamma\leq d.
\end{align}
\end{example}
\begin{example}\label{eg:DiagSPDE}
Let us consider the following SPDE, with the parameter of interest $\theta$ in front of lower order differential operator,
\begin{align}
du(t,x)+\left(\Delta u(t,x)+\theta u(t,x)\right)dt=\sum_{k=1}^{\infty}h_{k}(x)\,dw_{k}(t),\quad t\in[0,T],\quad x\in G,
\end{align}
with initial condition $u(0,x)=u_{0}(x)\in H$. In this case, $\gamma=0$, $\nu_{k}\equiv 1$ for all $k\in\mathbb{N}$, and
\begin{align}
\rho_{k}\sim c_{1}\,k^{2/d},\quad\mu_{k}(\theta)\sim\theta+c_{1}\,k^{2/d},\quad k\rightarrow\infty.
\end{align}
Together with Proposition \ref{prop:EquiDivConds}, the conditions \eqref{eq:zk} and \eqref{eq:SumVarAk} are equivalent to
\begin{align}
\sum_{k=1}^{\infty}\frac{1}{\left(\theta+c_{1}k^{2/d}\right)^{2}}=\infty\quad\text{and}\quad\sum_{k=1}^{\infty}\frac{1}{\left(\theta+c_{1}k^{2/d}\right)^{3}}=\infty,
\end{align}
respectively. Therefore, in order for  the consistency and the asymptotic normality of $\tilde{\theta}_N$ to hold true, we need to have $d\geq 6$.
\end{example}

{\small

\section{Appendix}

\noindent
\textit{Proof of Theorem \ref{prop:asymp}.} Due to the nature of desired asymptotic results, the underlying computations are somehow extensive and tedious. For simplicity of brevity, we will only provide the proof for the special case when $u_{0}=0$, $\gamma=0$ and $\sigma=1$, but the genera case can be verified using similar arguments and the details can be obtained from the authors upon request. Most of the evaluations were performed using symbolic computations in Mathematica. For each $k\in\mathbb{N}$, when $u_{0}=0$, $\gamma=0$ and $\sigma=1$,
\begin{align}
u_{k}(t)=e^{-\mu_{k}(\theta)t}\int_{0}^{t}e^{\mu_{k}(\theta)s}\,dw_{k}(s),\quad  t\in[0,T],
\end{align}
and with the notations introduced in \eqref{eq:notation} and \eqref{eq:thetaDif}, we get
\begin{align}
A_{k}=\frac{1}{2}\xi_{k}^{2}-X_{k}+2\mu_{k}(\theta)Z_{k}.
\end{align}
Note that for any $t\in[0,T]$, $u_{k}(t)$ is a centered normal random variable with variance  $(1-e^{-2\mu_{k}(\theta)t})/(2\mu_{k}(\theta))$, and thus,
\begin{align}\label{eq:CenNormalMoments}
\mathbb{E}\left(u_{k}^{2n}(t)\right)=(2n-1)!!\cdot\left(\frac{1-e^{-2\mu_{k}(\theta)t}}{2\mu_{k}(\theta)}\right)^{n},
\quad n\in\mathbb{N}.
\end{align}
We first verify \eqref{eq:ezk}, which now reduces to
\begin{align}\label{eq:newezk}
\mathbb{E}(Z_{k})\asymp\frac{T^{2}}{\mu_{k}^{2}(\theta)},\quad k\rightarrow\infty,
\end{align}
by computing
\begin{align}\label{eq:DecompEzk}
\mathbb{E}(Z_{k})=\mathbb{E}\left(\int_{0}^{T}\xi_{k}^{2}(t)dt\right)=\int_{0}^{T}\mathbb{E}\left(\xi_{k}^{2}(t)\right)dt
=2\int_{0}^{T}\int_{0}^{t}\mathbb{E}\left(\xi_{k}(s)u_{k}^{2}(s)\right)ds\,dt,
\end{align}
where the last equality follows from the definition of $\xi_{k}$ in \eqref{eq:notation}. By It\^{o}'s formula,
\begin{align}
d\xi_{k}(t)u_{k}^{2}(t)= \left(u_{k}^{4}(t)+\xi_{k}(t)\right)dt-2\mu_{k}(\theta)\xi_{k}(t)u_{k}^{2}(t)\,dt+2u_{k}(t)\xi_{k}(t)dw_{k}(t).
\end{align}
Taking the expectations on both sides above, using the definition of $\xi_{k}(t)$ in \eqref{eq:notation} and \eqref{eq:CenNormalMoments}, we obtain that, for $t\in[0,T]$,
\begin{align}\label{eq:Exikuk2}
\mathbb{E}\left(\xi_{k}(t)u_{k}^{2}(t)\right)=\frac{1-e^{-2\mu_{k}(\theta)t}}{8\mu_{k}^{3}(\theta)}-\frac{5te^{-2\mu_{k}(\theta)t}}{4\mu_{k}^{2}(\theta)}+\frac{3\left(1-e^{-2\mu_{k}(\theta)t}\right)e^{-2\mu_{k}(\theta)t}}{8\mu_{k}^{3}(\theta)}+\frac{t}{4\mu_{k}^{2}(\theta)}.
\end{align}
Therefore, by \eqref{eq:DecompEzk},
\begin{align}\label{eq:ExactEzk}
\mathbb{E}(Z_{k})=\frac{35-3e^{-4\mu_{k}(\theta)T}-32e^{-2\mu_{k}(\theta)T}}{64\mu_{k}^{5}(\theta)}-\frac{9T+10T\,e^{-2\mu_{k}(\theta)T}}{16\mu_{k}^{4}(\theta)}+\frac{T^{2}}{8\mu_{k}^{3}(\theta)}+\frac{T^{3}}{12\mu_{k}^{2}(\theta)},
\end{align}
which  leads to \eqref{eq:newezk}, since by the assumption (ii), the first three terms above all have higher orders than the last term, as $k\rightarrow\infty$, and since $T>0$ is a fixed constant.

Next, we study the asymptotic order of $\text{Var}(Z_{k})$, $k\rightarrow\infty$, given by \eqref{eq:varzk},  which now reduces to
\begin{align}\label{eq:NewVarzk}
\text{Var}(Z_{k})\asymp\frac{T^{3}}{\mu_{k}^{5}(\theta)},\quad k\rightarrow\infty.
\end{align}
In light of \eqref{eq:ExactEzk}, we are left to compute $\mathbb{E}(Z_{k}^{2})$. By It\^{o}'s formula,  and since the It\^{o} integral terms have zero expectation, we have
\begin{align}\label{eq:DecompEzk2}
\mathbb{E}(Z_{k}^{2})=2\!\int_{0}^{T}\!\mathbb{E}\!\left(Z_{k}(t)\xi_{k}^{2}(t)\right)dt=2\!\int_{0}^{T}\!\!\int_{0}^{t}\mathbb{E}\!\left(\xi_{k}^{4}(s)\right)\!ds\,dt+4\!\int_{0}^{T}\!\!\int_{0}^{t}\mathbb{E}\!\left(Z_{k}(s)\xi_{k}(s)u_{k}^{2}(s)\right)\!ds\,dt.
\end{align}
To compute the first expectation in \eqref{eq:DecompEzk2}, by It\^{o}'s formula again, we continue
\begin{align}
d\xi_{k}^{4}(t)&=4\xi_{k}^{3}(t)u_{k}^{2}(t)\,dt,\\
d\!\left(\xi_{k}^{3}(t)u_{k}^{2}(t)\right)&=\left(3\xi_{k}^{2}(t)u_{k}^{4}(t)+\xi_{k}^{3}(t)-2\mu_{k}(\theta)\xi_{k}^{3}(t)u_{k}^{2}(t)\right)dt+2\xi_{k}^{3}(t)u_{k}(t)\,dw_{k}(t),\\
d\xi_{k}^{3}(t)&=3\xi_{k}^{2}(t)u_{k}^{2}(t)\,dt,\\
d\!\left(\xi_{k}^{2}(t)u_{k}^{4}(t)\right)&=2\left(\xi_{k}(t)u_{k}^{6}(t)+3\xi_{k}^{2}(t)u_{k}^{2}(t)-2\mu_{k}(\theta)\xi_{k}^{2}(t)u_{k}^{4}(t)\right)dt+4\xi_{k}^{2}(t)u_{k}^{3}(t)\,dw_{k}(t).
\end{align}
Hence, we only need to compute $\mathbb{E}(\xi_{k}(s)u_{k}^{6}(s))$ and $\mathbb{E}(\xi_{k}^{2}(s)u_{k}^{2}(s))$. Again by It\^{o}'s formula, we get
\begin{align}
d\xi_{k}(t)u_{k}^{6}(t)&=\left(u_{k}^{8}(t)+15\xi_{k}(t)u_{k}^{4}(t)-6\mu_{k}(\theta)\xi_{k}(t)u_{k}^{6}(t)\right)dt+6\xi_{k}(t)u_{k}^{5}(t)\,dw_{k}(t),\\
d\xi_{k}^{2}(t)u_{k}^{2}(t)&=\left(2\xi_{k}(t)u_{k}^{4}(t)+\xi_{k}^{2}(t)-2\mu_{k}(\theta)\xi_{k}^{2}(t)u_{k}^{2}(t)\right)dt+2\xi_{k}^{2}(t)u_{k}(t)\,dw_{k}(t),\\
d\xi_{k}(t)u_{k}^{4}(t)&=\left(u_{k}^{6}(t)+6\xi_{k}(t)u_{k}^{2}(t)-4\mu_{k}(\theta)\xi_{k}(t)u_{k}^{4}(t)\right)dt+4\xi_{k}(t)u_{k}^{3}(t)\,dw_{k}(t).
\end{align}
Therefore, by \eqref{eq:CenNormalMoments} and \eqref{eq:Exikuk2}, we can obtain first $\mathbb{E}(\xi_{k}(s)u_{k}^{6}(s))$ and $\mathbb{E}(\xi_{k}^{2}(s)u_{k}^{2}(s))$, and then $\mathbb{E}(\xi_{k}^{3}(s))$, $\mathbb{E}(\xi_{k}^{2}(s)u_{k}^{4}(s))$ and $\mathbb{E}(\xi_{k}^{3}(s)u_{k}^{2}(s))$, and finally $\mathbb{E}(\xi_{k}^{4}(s))$. A similar argument leads to the computation of the second expectation in \eqref{eq:DecompEzk2}.

To sum up, with the help of Mathematica, we obtain that
\begin{align}
\text{Var}(Z_{k})&=-\frac{16917}{512\,\mu_{k}^{10}(\theta)}+\frac{3\,e^{-8\mu_{k}(\theta)T}}{128\,\mu_{k}^{10}(\theta)}+\frac{79\,e^{-6\mu_{k}(\theta)T}}{128\,\mu_{k}^{10}(\theta)}+\frac{2953\,e^{-4\mu_{k}(\theta)T}}{512\,\mu_{k}^{10}(\theta)}+\frac{3409\,e^{-2\mu_{k}(\theta)T}}{128\,\mu_{k}^{10}(\theta)}\\
&\quad\,\,+\frac{1093T}{32\,\mu_{k}^{9}(\theta)}+\frac{45T\,e^{-6\mu_{k}(\theta)T}}{64\,\mu_{k}^{9}(\theta)}+\frac{1165T\,e^{-4\mu_{k}(\theta)T}}{128\,\mu_{k}^{9}(\theta)}+\frac{2321T\,e^{-2\mu_{k}(\theta)T}}{64\,\mu_{k}^{9}(\theta)}\\
&\quad\,\,-\frac{659T^{2}}{64\,\mu_{k}^{8}(\theta)}+\frac{53T^{2}e^{-4\mu_{k}(\theta)T}}{16\,\mu_{k}^{8}(\theta)}+\frac{71T^{2}e^{-2\mu_{k}(\theta)T}}{8\,\mu_{k}^{8}(\theta)}-\frac{5T^{3}}{12\,\mu_{k}^{7}(\theta)}-\frac{5T^{3}e^{-4\mu_{k}(\theta)T}}{8\,\mu_{k}^{7}(\theta)}\\
&\quad\,\,-\frac{113T^{3}e^{-2\mu_{k}(\theta)T}}{24\,\mu_{k}^{7}(\theta)}+\frac{23T^{4}}{48\,\mu_{k}^{6}(\theta)}-\frac{5T^{4}e^{-2\mu_{k}(\theta)T}}{2\mu_{k}^{6}(\theta)}+\frac{T^{5}}{15\mu_{k}^{5}(\theta)},
\end{align}
which clearly implies \eqref{eq:NewVarzk}, and thus completes the proof.\hfill $\Box$

\medskip
\noindent
\textit{Proof of Lemma \ref{lemma:MomentEstxi}:} By \eqref{eq:FModeSols} and Cauchy--Schwartz inequality, for any $0\leq s\leq t\leq T$,
\begin{align}
u_{k}^{2}(s)&=e^{-2\mu_{k}(\theta)s}\left(u_{k}(0)+\sigma\lambda_{k}^{-\gamma}\int_{0}^{s}e^{\mu_{k}(\theta)r}\,dw_{k}(r)\right)^{2}\\
&\leq e^{-2\mu_{k}(\theta)s}\left(u_{k}^{2}(0)+\sigma^{2}\lambda_{k}^{-2\gamma}t\right)\left(1+\frac{1}{t}\left(\int_{0}^{s}e^{\mu_{k}(\theta)r}\,dw_{k}(r)\right)^{2}\right).
\end{align}
Hence, for any $t\in[0,T]$, and $n\in\mathbb{N}$,
\begin{align}
\xi_{k}^{n}(t)&\leq \left(u_{k}^{2}(0)+\sigma^{2}\lambda_{k}^{-2\gamma}t\right)^{n}\left\{\int_{0}^{t}e^{-2\mu_{k}(\theta)s}\left[1+\frac{1}{t}\left(\int_{0}^{s}e^{\mu_{k}(\theta)r}\,dw_{k}(r)\right)^{2}\right]ds\right\}^{n}\\
&=\left(u_{k}^{2}(0)+\sigma^{2}\lambda_{k}^{-2\gamma}t\right)^{n}\left[\frac{1-e^{-2\mu_{k}(\theta)t}}{2\mu_{k}(\theta)}+\frac{1}{t}\int_{0}^{t}e^{-2\mu_{k}(\theta)s}\left(\int_{0}^{s}e^{\mu_{k}(\theta)r}\,dw_{k}(r)\right)^{2}ds\right]^{n}\\
&\leq\left(u_{k}^{2}(0)+\sigma^{2}\lambda_{k}^{-2\gamma}t\right)^{n}\left\{\left(\frac{1-e^{-2\mu_{k}(\theta)t}}{\mu_{k}(\theta)}\right)^{n}\!\!+\frac{2^{n}}{t^{n}}\left[\int_{0}^{t}e^{-2\mu_{k}(\theta)s}\left(\int_{0}^{s}e^{\mu_{k}(\theta)r}\,dw_{k}(r)\right)^{2}\!ds\right]^{n}\right\}.
\end{align}
By~\cite[Theorem 2.1]{Lototsky2009Survey}, there exists a constant $\widetilde{D}_{n}=\widetilde{D}_{n}(t)>0$, such that
\begin{align}
\mathbb{E}\left(\left[\int_{0}^{t}e^{-2\mu_{k}(\theta)s}\left(\int_{0}^{s}e^{\mu_{k}(\theta)r}\,dw_{k}(r)\right)^{2}ds\right]^{n}\right)\leq\frac{\widetilde{D}_{n}}{\mu_{k}^{n}(\theta)}.
\end{align}
Therefore, for any $t\in[0,T]$ and $n\in\mathbb{N}$,
\begin{align}
\mathbb{E}\left(\xi_{k}^{n}(t)\right)\leq\left(u_{k}^{2}(0)+\sigma^{2}\lambda_{k}^{-2\gamma}t\right)^{n}\left(\frac{1}{\mu_{k}^{n}(\theta)}+\frac{2^{n}\widetilde{D}_{n}}{t^{n}}\frac1{\mu_{k}^{n}(\theta)}\right)=D_{n}\left(\frac{u_{k}^{2}(0)+\sigma\lambda_{k}^{-2\gamma}t}{\mu_{k}(\theta)}\right)^{n},
\end{align}
where $D_{n}=D_{n}(t):=1+2^{n}t^{-n}\widetilde{D}_{n}$.\hfill $\Box$

\medskip
We conclude the appendix by listing, for sake of completeness,  a version of law of large numbers and a version of central limit theorem used in the proofs of the main results in this paper.
For the detailed proofs of these results we refer the reader to \cite{ShiryaevBookProbability}.

\begin{theorem}\label{thm:SLLN} \textbf{\emph{(Strong Law of Large Number)}}$\,$
Let $\{\eta_{n}\}_{n\in\mathbb{N}}$ be a sequence of independent random variables, and let $\{b_{n}\}_{n\in\mathbb{N}}$ be a sequence of non-decreasing positive numbers such that $\lim_{n\rightarrow\infty}b_{n}=\infty$. If
\begin{align}
\sum_{n=1}^{\infty}\frac{\emph{Var}\left(\eta_{n}\right)}{b_{n}^{2}}<\infty,
\end{align}
then
\begin{align}
\lim_{n\rightarrow\infty}\frac{1}{b_{n}}\sum_{k=1}^{n}\left(\eta_{k}-\mathbb{E}(\eta_{k})\right)=0,\quad\mathbb{P}-\text{a.}\,\text{s.}.
\end{align}
\end{theorem}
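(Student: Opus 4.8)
The plan is to reduce the statement to the almost sure convergence of a random series and then invoke Kronecker's lemma. First I would assume without loss of generality that $\mathbb{E}(\eta_n)=0$ for every $n$, by replacing $\eta_n$ with $\eta_n-\mathbb{E}(\eta_n)$; this substitution leaves the variances, and hence the hypothesis, unchanged. Writing $S_n:=\sum_{k=1}^{n}\eta_k$, the goal becomes $S_n/b_n\to 0$ almost surely.

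Next I would look at the series $\sum_{k=1}^{\infty}\eta_k/b_k$. Because the $\eta_k$ are independent and centered, the partial sums $T_n:=\sum_{k=1}^{n}\eta_k/b_k$ have orthogonal increments, so $\mathbb{E}(T_n^2)=\sum_{k=1}^{n}\mathrm{Var}(\eta_k)/b_k^2\le\sum_{k=1}^{\infty}\mathrm{Var}(\eta_k)/b_k^2<\infty$ by hypothesis; thus $\{T_n\}$ is bounded in $L^2$. Applying Kolmogorov's maximal inequality to the tails $T_{n+m}-T_n$ (equivalently, Kolmogorov's one-series theorem, or the $L^2$ martingale convergence theorem) shows that $\{T_n\}$ is almost surely Cauchy, hence converges almost surely to a finite limit.

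Finally I would invoke Kronecker's lemma in its deterministic form: if $\{b_n\}$ is non-decreasing with $b_n\to\infty$ and $\sum_{k=1}^{\infty}x_k/b_k$ converges, then $\frac{1}{b_n}\sum_{k=1}^{n}x_k\to 0$. Applying this pathwise, on the almost sure event on which $\sum_k\eta_k(\omega)/b_k$ converges, with $x_k=\eta_k(\omega)$, yields $S_n(\omega)/b_n\to 0$, which is exactly the claim.

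The only substantive step is the almost sure convergence of $\{T_n\}$: that is where independence (through orthogonality of increments) and the summability hypothesis are genuinely used, via Kolmogorov's maximal inequality. Kronecker's lemma and the reduction to the centered case are routine. All the ingredients are classical and are carried out in detail in \cite{ShiryaevBookProbability}.
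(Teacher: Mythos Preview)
Your argument is correct and is precisely the classical route: center the variables, use Kolmogorov's one-series theorem (via the maximal inequality or $L^2$ martingale convergence) to get almost sure convergence of $\sum_k \eta_k/b_k$, and then apply Kronecker's lemma pathwise. The paper does not supply its own proof of this theorem at all; it merely states the result and refers the reader to \cite{ShiryaevBookProbability}, where the proof is exactly the one you outline.
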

\begin{remark}\label{rem:SLLN}
As an immediate corollary, if $\{\eta_{n}\}_{n\in\mathbb{N}}$ is a sequence of independent non-negative random variables with
\begin{align}
\sum_{n=1}^{\infty}\mathbb{E}(\eta_{n})=\infty\quad\text{and}\quad\sum_{n=1}^{\infty}\frac{\text{Var}\left(\eta_{n}\right)}{\left(\sum_{k=1}^{n}\mathbb{E}(\eta_{k})\right)^{2}}<\infty,
\end{align}
then
\begin{align}
\lim_{n\rightarrow\infty}\frac{\sum_{k=1}^{n}\eta_{k}}{\sum_{k=1}^{n}\mathbb{E}(\eta_{k})}=1,\quad\mathbb{P}-\text{a.}\,\text{s.}.
\end{align}
\end{remark}

\begin{theorem}\label{thm:LyapunovCLT} \textbf{\emph{(Lyapunov Central Limit Theorem)}}
Let $\{\eta_{n}\}_{n\in\mathbb{N}}$ be a sequence of independent random variables with finite second moments. If there exists some $\delta>0$, such that
\begin{align}\label{eq:Lyapunov}
\lim_{n\rightarrow\infty}\frac{1}{\left(\sum_{k=1}^{n}\emph{Var}(\eta_{k})\right)^{2+\delta}}\sum_{k=1}^{n}\mathbb{E}\left(\left|\eta_{k}-\mathbb{E}(\eta_{k})\right|^{2+\delta}\right)=0,
\end{align}
then
\begin{align}
\frac{\sum_{k=1}^{n}\left(\eta_{k}-\mathbb{E}(\eta_{k})\right)}{\sqrt{\sum_{k=1}^{n}\emph{Var}(\eta_{k})}}\covdist\mathcal{N}(0,1),\quad n\rightarrow\infty.
\end{align}
\end{theorem}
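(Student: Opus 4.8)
The plan is to obtain the statement from the classical Lindeberg central limit theorem, the only genuinely new content being the elementary implication that the Lyapunov condition \eqref{eq:Lyapunov} forces the Lindeberg condition. First I would pass to the centered, normalized setting: put $X_k:=\eta_k-\mathbb{E}(\eta_k)$, $\sigma_k^2:=\text{Var}(\eta_k)=\mathbb{E}(X_k^2)$, and $s_n:=(\sum_{k=1}^n\sigma_k^2)^{1/2}$, so that the claim becomes $s_n^{-1}\sum_{k=1}^n X_k\covdist\mathcal{N}(0,1)$. I would read the Lyapunov quotient with $s_n^{2+\delta}$ in the denominator, which is precisely the quantity verified in Step~1 of the proof of Theorem~\ref{thm:AsymNormal} (there $\delta=2$ and $s_n^{4}=(\sum_{k=1}^n\text{Var}(\eta_k))^2$).

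The first step is the implication Lyapunov $\Rightarrow$ Lindeberg. Fix $\varepsilon>0$; on $\{|X_k|>\varepsilon s_n\}$ one has $X_k^2\le|X_k|^{2+\delta}(\varepsilon s_n)^{-\delta}$, so that
\begin{align}
\frac{1}{s_n^2}\sum_{k=1}^n\mathbb{E}\!\left(X_k^2\,\1_{\{|X_k|>\varepsilon s_n\}}\right)\le\frac{1}{\varepsilon^{\delta}}\cdot\frac{1}{s_n^{2+\delta}}\sum_{k=1}^n\mathbb{E}\!\left(|X_k|^{2+\delta}\right),
\end{align}
and the right-hand side tends to $0$ by \eqref{eq:Lyapunov}. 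This is exactly the Lindeberg condition; as an immediate by-product (bounding a single summand by the whole sum) it also yields the asymptotic negligibility $\max_{1\le k\le n}\sigma_k^2/s_n^2\to0$, which I will use below.

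The second step is the Lindeberg CLT itself, which I would prove by the characteristic-function method (alternatively one may simply cite it as the stated classical result). Let $\varphi_k$ denote the characteristic function of $X_k/s_n$; by independence the normalized sum has characteristic function $\prod_{k=1}^n\varphi_k(t)$, and by Lévy's continuity theorem it suffices to show $\prod_{k=1}^n\varphi_k(t)\to e^{-t^2/2}$ for each fixed $t$. Using negligibility to guarantee $\max_k|\varphi_k(t)-1|\to0$, I would replace the product by $\exp\!\big(\sum_{k=1}^n(\varphi_k(t)-1)\big)$ up to a vanishing error (via the unit-disc inequality $|\prod a_k-\prod b_k|\le\sum|a_k-b_k|$ together with $|e^z-(1+z)|\le|z|^2e^{|z|}$), and reduce matters to $\sum_{k=1}^n(\varphi_k(t)-1)\to-t^2/2$. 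Since $\sum_k\mathbb{E}(itX_k/s_n)=0$ and $\sum_k\mathbb{E}(-t^2X_k^2/(2s_n^2))=-t^2/2$, this comes down to controlling the aggregate Taylor remainder $R_n:=\sum_{k=1}^n\mathbb{E}\big|e^{itX_k/s_n}-(1+itX_k/s_n-\tfrac{t^2X_k^2}{2s_n^2})\big|$.

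The estimate of $R_n$ is the crux of the argument and the main obstacle. Here I would invoke the elementary bound $|e^{ix}-(1+ix-\tfrac{x^2}{2})|\le\min(|x|^3,x^2)$ and split each expectation according to $\{|X_k|\le\varepsilon s_n\}$ and its complement. On the small set the cubic bound gives a contribution $\le|t|^3s_n^{-3}\mathbb{E}(|X_k|^3\1_{\{|X_k|\le\varepsilon s_n\}})\le\varepsilon|t|^3s_n^{-2}\mathbb{E}(X_k^2)$, which sums to $\varepsilon|t|^3$; on the complement the quadratic bound gives $t^2s_n^{-2}\mathbb{E}(X_k^2\1_{\{|X_k|>\varepsilon s_n\}})$, which sums to $t^2$ times the Lindeberg quantity and hence vanishes. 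Thus $\limsup_n R_n\le\varepsilon|t|^3$ for every $\varepsilon>0$, so $R_n\to0$; this closes the characteristic-function computation and, with Lévy's continuity theorem, completes the proof.
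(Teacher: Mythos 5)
Your proof is correct, but there is essentially nothing in the paper to compare it against: the paper never proves Theorem~\ref{thm:LyapunovCLT}. It is listed in the Appendix ``for sake of completeness'' as a classical result, with the proof delegated to \cite[Section III.4]{ShiryaevBookProbability}. What you supply is the standard textbook argument --- Lyapunov implies Lindeberg via the truncation bound $X_k^2\le |X_k|^{2+\delta}(\varepsilon s_n)^{-\delta}$ on $\{|X_k|>\varepsilon s_n\}$, then the characteristic-function proof of the Lindeberg--Feller CLT --- which is precisely the route the cited reference takes, so your proposal makes self-contained what the paper handles by citation. The steps all check out: the passage from $\prod_k\varphi_k(t)$ to $\exp\bigl(\sum_k(\varphi_k(t)-1)\bigr)$ legitimately uses $|\varphi_k(t)|\le 1$, $|e^{\varphi_k(t)-1}|\le 1$, $\sum_k|\varphi_k(t)-1|\le t^2/2$, and the negligibility $\max_{1\le k\le n}\sigma_k^2/s_n^2\to 0$ that you correctly extract from Lindeberg; and your $\varepsilon$-split estimate of the Taylor remainder $R_n$ is the standard one (the elementary inequality is usually quoted as $\min\bigl(|x|^3/6,\,x^2\bigr)$, but your weaker $\min\bigl(|x|^3,x^2\bigr)$ suffices).

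One point deserves emphasis: you silently, and correctly, repaired the statement. As printed, \eqref{eq:Lyapunov} has $\bigl(\sum_{k=1}^{n}\Var(\eta_k)\bigr)^{2+\delta}=s_n^{2(2+\delta)}$ in the denominator, and with that denominator the theorem is false: for $\eta_k=\pm 2^k$ with probability $1/2$ each, the printed quotient tends to $0$ for every $\delta>0$, yet $s_n^{-1}\sum_{k=1}^{n}\eta_k$ converges in distribution to the uniform law on $[-\sqrt{3},\sqrt{3}]$ (variance $1$, but not Gaussian). The correct denominator is $s_n^{2+\delta}=\bigl(\sum_{k=1}^{n}\Var(\eta_k)\bigr)^{(2+\delta)/2}$, which is exactly what you used, and it is also the form in which the theorem is actually invoked in Step 1 of the proof of Theorem~\ref{thm:AsymNormal}, where $\delta=2$ and the denominator is $\bigl(\sum_{k=1}^{N}\Var(\nu_kA_k)\bigr)^{2}$. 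So your argument proves the intended statement rather than the literal one, and the paper's statement should be corrected accordingly.
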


%%%%%%%%%%%%%%%%%%%%%%%%%%%%%%%%%%%%%%%%%%%%%%%%%%%%%%
\section*{Acknowledgments}
Part of the research was performed while Igor Cialenco was visiting the Institute for Pure and Applied Mathematics (IPAM), which is supported by the National Science Foundation. The authors would like to thank the anonymous referees, the associate editor and the editor for their helpful comments and suggestions which improved greatly the final manuscript.

\bibliographystyle{alpha}

\def\cprime{$'$}

}
\end{document}